\documentclass[11pt,leqno]{amsart}

\usepackage[]{hyperref}
\hypersetup{
    colorlinks=true,       
    linkcolor=red,          
    citecolor=blue,        
    filecolor=magenta,      
    urlcolor=cyan           
}

\usepackage{amsmath}
\usepackage{amsfonts,amssymb}
\usepackage{enumerate}
\usepackage{mathrsfs}
\usepackage{amsthm}

\parskip=10pt
\parindent=0pt

\theoremstyle{plain}
\newtheorem{theorem}{Theorem}[section]
\newtheorem*{theo*}{Theorem}
\newtheorem{proposition}[theorem]{Proposition}
\newtheorem{lemma}[theorem]{Lemma}
\newtheorem{corollary}[theorem]{Corollary}
\newtheorem{definition}[theorem]{Definition}
\theoremstyle{definition}
\newtheorem{remark}[theorem]{Remark}
\DeclareMathOperator{\cnx}{div}
\DeclareMathOperator{\cn}{div}

\DeclareMathOperator{\diff}{d}
\DeclareSymbolFont{pletters}{OT1}{cmr}{m}{sl}
\DeclareMathSymbol{s}{\mathalpha}{pletters}{`s}

\def\ah{\arrowvert_{y=h}}

\def\ba{\begin{align}}
\def\bad{\begin{aligned}}
\def\be{\begin{equation}}
\def\ea{\end{align}}
\def\ead{\end{aligned}}
\def\ee{\end{equation}}
\def\e{\eqref}

\def\dsigma{\diff \! \sigma}

\def\dt{\diff \! t}
\def\dx{\diff \! x}

\def\dydx{\diff \! y \diff \! x}

\def\fract{\frac{\diff}{\dt}}
\def\fractt{\frac{\diff^2}{\dt^2}}

\def\cnxy{\cn_{x,y}}

\def\defn{\mathrel{:=}}

\def\eps{\varepsilon}

\def\la{\left\vert}
\def\lA{\left\Vert}
\def\le{\leq}

\def\mez{\frac{1}{2}}
\def\partialx{\nabla}

\def\ra{\right\vert}
\def\rA{\right\Vert}

\def\xN{\mathbf{N}}
\def\xR{\mathbf{R}}

\def\xT{\mathbf{T}}

\numberwithin{equation}{section}

\begin{document}

\setlength{\baselineskip}{5mm}
\title{}
\pagestyle{plain}

\date{}
\begin{center}
{\Large{\textbf{Convexity and the Hele-Shaw equation} }}
\vspace{5mm}

{
{\large{Thomas Alazard}}\\
\noindent {\small CNRS and \'Ecole Normale Sup\'erieure Paris-Saclay}}
\end{center}

\vspace{10mm}
\begin{abstract}
Walter Craig's seminal works on the water-waves problem established the importance of several exact identities: 
Zakharov's hamiltonian formulation, shape derivative formula for the Dirichlet to Neumann operator, normal forms transformations. 
In this paper, we introduce several identities for the Hele-Shaw equation 
which are inspired by his nonlinear approach. Firstly, we study 
convex changes of unknowns and obtain a large class of strong Lyapunov functions; 
in addition to be non-increasing, these Lyapunov functions  are convex functions of time. 
The analysis relies on a compact elliptic formulation of the Hele-Shaw equation, which is of independent interest. 
Then we study the role of convexity to control 
the spatial derivatives of the solutions. We consider the 
evolution equation for the Rayleigh-Taylor coefficient~$a$ (this is a positive function proportional 
to the opposite of the normal derivative of the pressure at the free surface). 
Inspired by the study of entropies for elliptic or parabolic equations, we consider the special function $\varphi(x)=x\log x$ and find 
that $\varphi(1/\sqrt{a})$ is a sub-solution of a well-posed equation.
\end{abstract}

\maketitle

\section{Introduction}

\subsection{The Hele-Shaw equation} Consider an incompressible 
liquid having a free surface given as a graph, so that, at time $t\ge 0$,  
the fluid domain is of the form
$$
\Omega(t)=\{ (x,y) \in \xT^{n}\times \xR\,;\, y < h(t,x)\},
$$
where $\xT^n$ denotes a $n$-dimensional torus, $x$ (resp.\ $y$) is the horizontal (resp.\ vertical) space variable. 
In the Eulerian coordinate system, in addition to the the free surface elevation~$h$, the 
unknowns are the velocity field 
$v\colon \Omega\rightarrow \xR^{n+1}$ and the scalar pressure $P\colon\Omega\rightarrow \xR$. We assume that they satisfy the Darcy's equations:
\be\label{Darcy}
\cnxy v=0 \quad\text{ and }\quad v=-\nabla_{x,y} (P+gy) \quad \text{in }\Omega.
\ee
A timescale may be chosen so that the acceleration of gravity is $g=1$.

These equations are supplemented by two boundary conditions. Firstly, one assumes that the pressure vanishes on the free surface:
$$
P=0\quad \text{on}\quad\partial\Omega.
$$
The second boundary condition states that the normal velocity 
of the free surface is equal to the normal component of the fluid velocity on the free surface. 
It follows that
\be\label{HS3}
\partial_t h=\sqrt{1+|\partialx h|^2} \, v\cdot n,
\ee
where $\nabla=\nabla_x$ and $n$ is the outward unit normal to $\partial\Omega$, given by
$$
n=\frac{1}{\sqrt{1+|\nabla h|^2}} \begin{pmatrix} -\nabla h \\ 1 \end{pmatrix}.
$$

Notice that the velocity field $v$ is a gradient, that is $v=-\nabla_{x,y}\phi$ 
where $\phi=P+y$ (recall that we set $g=1$). Since $\cnx_{x,y}v=0$, the potential $\phi$ is harmonic, and hence it is fully determined by its trace on the boundary, which is $h$ since $P$ vanishes on the boundary. We have
\be\label{HS4}
\Delta_{x,y}\phi=0\quad \text{in }\Omega,\qquad \phi\arrowvert_{y=h}=h.
\ee
Consequently, $v$ is fully determined by $h$ which implies that 
the Hele-Shaw problem 
simplifies to an evolution equation for $h$ only; 
namely the equation~\e{HS3}. Once $h$ is determined, one obtains $\phi$ by solving~\e{HS4} and then one sets $v=-\nabla_{x,y}\phi$ and $P=-\phi-y$.

The previous reduction to an evolution equation for $h$ is better formulated by introducing 
the Dirichlet-to-Neumann operator (this operator plays a key role in the analysis by Walter Craig and 
Catherine Sulem of the water-waves equations). 
For a given time~$t$, that is omitted here, 
and a function $\psi=\psi(x)$, 
$G(h)\psi$ is defined by (see~\S\ref{S:21} for details)
$$
G(h)\psi (x)=\sqrt{1+|\nabla h|^2}\partial_n\varphi\arrowvert_{y=h(x)}
=\partial_y\varphi(x,h(x))-\nabla h(x)\cdot\nabla\varphi(x,h(x)),
$$
where $\varphi$ is the harmonic extension of $\psi$, given by
\be\label{defi:varphi}
\Delta_{x,y}\varphi=0\quad \text{in }\Omega,\qquad \varphi\arrowvert_{y=h}=\psi.
\ee
Then, with this notation, it follows from 
the equation~\e{HS3} that (see~\S\ref{S:22}) 
\begin{equation}\label{n7}
\partial_{t}h+G(h)h=0.
\end{equation}
This equation is analogous to the Craig--Sulem--Zakharov formulation of the water-waves equations (following Zakharov~\cite{Zakharov1968} and Craig--Sulem~\cite{CrSu}).

There are many other possible approaches to study the Cauchy problem for the 
Hele-Shaw equation. One can study the existence of weak solutions, viscosity solutions or classical solutons; we refer the reader to \cite{ChangLaraGuillenSchwab,Chen-ARMA-1993,Cheng-Belinchon-Shkoller-AdvMath,CCG-Annals,Escher-Simonett-ADE-1997,GG-JPS-AdvMaths-2019,Gunther-Prokert-SIAM-2006,Hadzic-Shkoller-CPAM2015,Kim-ARMA2003,Knupfer-Masmoudi-ARMA-2015,Pruss-Simonett-book}. These papers consider different formulations of the Hele-Shaw 
problem and we notice that,  for rough solutions, it is not obvious to check that these formulations are equivalent. In this 
article, we are interested in proving some qualitative properties of the flow. To do so, we consider classical solutions (in the sense 
of Definition~\ref{defi:regular} below). The parabolic smoothing effect implies that, for positive times, 
these solutions are 
$C^\infty$ in space and time so that it is elementary to rigorously justify the computations.

\subsection{Main results} In this paper we study some properties of the Hele-Shaw equation which are related to convexity. 
Firstly, we study the existence of Lyapunov functions of the form
$$
I_\Phi(t)=\int_{\xT^n}\Phi(h(t,x))\dx.
$$
We show that if both $\Phi$ and $\Phi'$ are convex, then $I_\Phi(t)$ is a strong Lyapunov function, by this we mean that $t\mapsto I_\Phi(t)$ 
is a non-increasing {\em convex}  function. To study this problem, 
we will introduce a new elliptic formulation of the Hele-Shaw equation. Namely, we observe that the linearized Hele-Shaw equation can be written as $\Delta_{t,x}h=0$ and find an analogous elliptic formulation equation for the nonlinear Hele-Shaw equation. 
Eventually, we study the role of convexity by seeking entropy-type inequalities. 

\textbf{Lyapunov functions.} 
Consider a convex function $\Phi\colon \xR\to\xR^+$. 
With Nicolas Meunier and Didier Smets 
we proved in~\cite{AMS} that
$$
I_\Phi\colon [0,T]\to \xR^+,\quad  t\mapsto \int \Phi(h(t,x))\dx
$$
is a Lyapunov function (which means that the latter quantity is a non-increasing positive function). 
The first main result of this paper is that, if one further assumes that 
the derivative $\Phi'$ is also convex, then the latter quantity is a 
{\em strong Lyapunov function}; by this we mean that it is a non-increasing convex function. 

\begin{theorem}\label{T1}
Consider a smooth solution $h$ to the Hele-Shaw equation. 

$i)$ If $\Phi\colon\xR\to\xR^+$ is a 
$C^2$ convex function, then
\be\label{convexity1}
\fract I_\Phi\le 0 \quad\text{where}\quad I_\Phi(t)=\int_{\xT^n}\Phi(h(t,x))\dx. 
\ee
$ii)$ Assume that $\Phi\colon\xR\to\xR^+$ is a 
$C^3$ convex function whose derivative is also convex. 
Then
\be\label{convexity2}
\fract I_\Phi\le 0\quad\text{and}\quad\fractt I_\Phi\ge 0.
\ee
\end{theorem}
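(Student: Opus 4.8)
The plan is to differentiate $I_\Phi$ twice in time and use the structure of the Hele-Shaw equation $\partial_t h + G(h)h = 0$ together with positivity properties of the Dirichlet-to-Neumann operator $G(h)$. For part $i)$, I would compute
\[
\fract I_\Phi = \int_{\xT^n}\Phi'(h)\,\partial_t h\,\dx = -\int_{\xT^n}\Phi'(h)\,G(h)h\,\dx,
\]
and then invoke the symmetry of $G(h)$ together with the chain rule. Since $G(h)$ is self-adjoint and nonnegative, and since for a harmonic extension one has the Dirichlet-energy identity $\int \psi\, G(h)\psi\,\dx = \iint_{\Omega}|\nabla_{x,y}\varphi|^2$, the key point is that $\int \Phi'(h)\,G(h)h\,\dx$ can be rewritten via the harmonic extension $\phi$ of $h$ as $\iint_\Omega \Phi''(\phi)|\nabla_{x,y}\phi|^2\,\dX$ — one integrates by parts in $\Omega$, using that $\phi$ is harmonic and that $\Phi'(\phi)$ has the same boundary trace as $\Phi'(h)$. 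Convexity of $\Phi$ gives $\Phi''\ge 0$, hence $\fract I_\Phi\le 0$. (This is the identity from \cite{AMS}; I would recall it rather than reprove it.)

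For part $ii)$, the substance is the second derivative. I would differentiate the relation $\fract I_\Phi = -\iint_\Omega \Phi''(\phi)|\nabla_{x,y}\phi|^2\,\dX$ in time, which requires controlling both the motion of the domain $\Omega(t)$ and the time derivative $\partial_t\phi$. Here the announced elliptic formulation of the Hele-Shaw equation should do the work: the claim in the introduction is that the nonlinear equation admits an elliptic reformulation analogous to $\Delta_{t,x}h=0$ for the linearization. Concretely, I expect that the natural space-time extension $u(t,x,y)$ — say the harmonic extension of $h$ at each time, or a suitably chosen potential on the space-time domain — satisfies a second-order elliptic (divergence-form) equation $\cnx_{t,x,y}(A\nabla_{t,x,y}u)=0$ with $A$ positive definite. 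Granting such a formulation, $\fractt I_\Phi$ becomes, after integration by parts in the space-time (or at fixed-time spatial) domain, an integral of the form $\iint \Phi'''(\cdot)\,(\text{gradient})\cdot(\text{something}) + \iint \Phi''(\cdot)\,|\nabla_{t,x,y}v|^2_A$ where $v$ is the time derivative of the extended unknown. The first type of term is where convexity of $\Phi'$ enters: $\Phi'''\ge 0$ makes it have a sign, and the second is manifestly nonnegative because $A$ is positive definite. Combining, $\fractt I_\Phi\ge 0$.

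The main obstacle, and the step I expect to absorb most of the work, is establishing and then correctly exploiting the compact elliptic formulation: one must identify the right space-time extension, verify ellipticity of the associated operator, and — crucially — keep careful track of the boundary contributions generated by the moving free surface $y=h(t,x)$ when integrating by parts, so that they either vanish (using $P=0$ on $\partial\Omega$, i.e. the boundary trace condition) or combine into a term with a definite sign. A secondary technical point is justifying all differentiations under the integral sign and the integrations by parts: this is where the smoothness hypothesis (Definition~\ref{defi:regular}) and the parabolic smoothing for $t>0$ are used, reducing the rigor to an elementary matter as noted in the introduction. Once the elliptic identity is in hand, the sign analysis is a short computation driven entirely by $\Phi''\ge 0$ and $\Phi'''\ge 0$.
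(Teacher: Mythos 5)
Your part~$i)$ is fine, and it actually takes a cleaner route than the paper: you integrate by parts in the bulk domain $\Omega$, obtaining $\int \Phi'(h)G(h)h\,\dx = \iint_\Omega \Phi''(\phi)\,\lvert\nabla_{x,y}\phi\rvert^2\,\dX \ge 0$ directly by Green's identity and harmonicity. The paper instead invokes the pointwise C\'ordoba--C\'ordoba-type inequality $\Phi'(f)\,G(h)f \ge G(h)\big(\Phi(f)\big)$ at the boundary and then uses $\int G(h)\psi\,\dx = 0$. Both work; the bulk identity you sketch is essentially the proof of the integrated form of that inequality.

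Part~$ii)$, however, is a plan rather than a proof, and the plan is built on a misconception about the elliptic formulation. You posit a \emph{space-time bulk} divergence-form equation $\cnx_{t,x,y}(A\nabla_{t,x,y}u)=0$ on the moving space-time domain and then worry about tracking boundary terms on the moving free surface. The actual elliptic formulation (Theorem~\ref{proposition:elliptic}) is a scalar equation on the \emph{free surface} alone,
\[
\Delta_{t,x}h + B(h)^*\big(\lvert\nabla_{t,x}h\rvert^2\big) = 0,
\]
posed in the flat variables $(t,x)\in[0,T]\times\xT^n$, with $B(h)^*$ the $L^2(\xT^n)$-adjoint of $\psi\mapsto\partial_y\varphi\arrowvert_{y=h}$. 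The moving-domain bookkeeping you anticipate simply does not arise.

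More seriously, your sign analysis is hand-waved precisely where the work is. In the paper's proof, one multiplies the elliptic identity by $\Phi'(h)$, uses $B(h)\Phi'(h) = \dfrac{G(h)\Phi'(h) + \nabla h\cdot\nabla\Phi'(h)}{1+\lvert\nabla h\rvert^2}$, then applies the C\'ordoba--C\'ordoba inequality to the convex function $\Phi'$ to get $G(h)\Phi'(h)\le \Phi''(h)G(h)h$, hence $B(h)\Phi'(h)\le \Phi''(h)B$. This is the only place $\Phi'''\ge 0$ is used, and it is a concrete inequality, not a generic ``$\Phi'''$ has a sign so the term is signed.'' After an integration by parts in time, one is left with a term $\int_0^T\!\!\int \Phi''(h)\,(1-B)\,\lvert\nabla_{t,x}h\rvert^2\,\dx\,\dt$, and here the Rayleigh--Taylor coefficient $a = 1-B > 0$ (Proposition~\ref{Coro:Zaremba-Taylor}) is indispensable. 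You never mention $a$, yet without its positivity the argument collapses: convexity of $\Phi$ and $\Phi'$ alone does not control the sign. To repair your proposal you would need to (a) use the correct form of the elliptic identity, (b) carry out the $B(h)^*$ manipulation and apply C\'ordoba--C\'ordoba to $\Phi'$, and (c) invoke $a>0$.
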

\begin{remark}\begin{enumerate}[i)]
\item In \cite{AMS}, the inequality~\e{convexity1} is proved only 
for $\Phi(h)=h^{2p}$ for all $p$ in $\{1\}\cup 2\xN$; but the generalization to an 
arbitrary convex function is straightforward.

\item To the author's knowledge, the study of the existence of strong convex Lyapunov function is new. 
\item It follows from Stokes' theorem that $\int_{\xT^n} h G(h)h\dx\ge 0$ (see~\e{positivityDN}) . So,  
by multiplying the equation $\partial_t h+G(h)h=0$ by $h$ and integrating over $\xT^n$, one obtains the classical 
result that the $L^2$-norm is a Lyapunov function:
$$
\fract \int_{\xT^n}h(t,x)^2\dx\le 0.
$$
This is the special case for~\e{convexity1} with $\Phi(h)=h^2$. On the other hand, 
the fact that~\e{convexity2} holds for $\Phi(h)=h^2$ is already highly non trivial. 
Indeed, this follows from the 
following identity (first proved in~\cite{AMS}):
$$
\fractt \int_{\xT^n}h^2\dx=-\fract\int_{\xT^n}hG(h)h\dx=\int_{\xT^n}a \la \nabla_{t,x}h\ra^2\dx\ge 0,
$$
where $a$ is a positive coefficient (this is the so-called Taylor coefficient).
\end{enumerate}
\end{remark}

\textbf{An elliptic formulation.} 
To prove Theorem~\ref{T1}, 
we will introduce 
an elliptic formulation of the Hele-Shaw problem. To explain this, we begin by considering 
the linearized equation $\partial_t h+G(0)h=0$. Recall that the Dirichlet-to-Neumann operator $G(0)$ associated to a flat half-space is given explicitly by $G(0)=\lvert D_x\rvert$, that is the Fourier multiplier defined by 
$\lvert D_x\rvert e^{ix\cdot \xi}
=\lvert \xi\rvert e^{ix\cdot \xi}$. 
Then the linearized Hele-Shaw equation reads
$$
\partial_t h+\la D_x\ra h=0.
$$
Now, observe that the previous equation is 
elliptic. Indeed, its symbol $i\tau+\la \xi\ra$ is obviously an elliptic symbol or order $1$. Another way to see this is to make act 
$\partial_t-\la D_x\ra$ on the equation. Since $-\la D_x\ra^2=\Delta_x$, we find
$$
\Delta_{t,x}h=\partial_t^2 h+\Delta_x h=0.
$$
The next result generalizes this observation to the Hele-Shaw equation.

\begin{theorem}\label{proposition:elliptic}
If $h$ is a smooth solution to $\partial_t h+G(h)h=0$ then 
$$
\Delta_{t,x}h+B(h)^*\big( \la \nabla_{t,x}h\ra^2\big)=0,
$$
where $B(h)^*$ is the adjoint (for the $L^2(\xT^n)$-scalar product) of the operator defined by
$$
B(h)\psi=\partial_y \varphi\arrowvert_{y=h},
$$
where $\varphi$ is the harmonic extension of $\psi$ (given by~\e{defi:varphi}).
\end{theorem}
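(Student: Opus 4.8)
The plan is to differentiate the evolution equation~\e{n7} once in time, using Craig's shape-derivative formula for the Dirichlet--Neumann operator, and then to reorganise the outcome with the help of the adjoint $B(h)^*$. First I would record the identity
\[
-G(h)f+\cnx(f\nabla h)=-B(h)^*\big((1+\la\nabla h\ra^2)f\big),\qquad\text{valid for every }f\colon\xT^n\to\xR,
\]
which follows from the relation $B(h)\psi=(1+\la\nabla h\ra^2)^{-1}\big(G(h)\psi+\nabla h\cdot\nabla\psi\big)$ (itself obtained by differentiating $\varphi\ah=\psi$ in $x$ and inserting the formula for $G(h)$) together with the self-adjointness of $G(h)$ and an integration by parts over $\xT^n$. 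Since the harmonic extension of a constant is constant, $G(h)\mathbf{1}=0$, and combined with the above this gives $\Delta_x h=\cnx(\nabla h)=-G(h)\mathbf{1}+\cnx(\mathbf{1}\cdot\nabla h)=-B(h)^*(1+\la\nabla h\ra^2)$.

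Next I would compute $\partial_t^2 h=-\partial_t\big(G(h)h\big)$. Craig's shape-derivative formula $\partial_t\big(G(h)h\big)=\big(d_hG(h)[\partial_t h]\big)h+G(h)\partial_t h$, with $\big(d_hG(h)[\dot h]\big)\psi=-G(h)\big(\dot h\,B(h)\psi\big)-\cnx\big(\dot h\,V(h)\psi\big)$ and $V(h)\psi=\nabla\psi-(B(h)\psi)\nabla h$, gives
\[
\partial_t^2 h=G(h)\big(\partial_t h\,B(h)h\big)+\cnx\big(\partial_t h\,V(h)h\big)-G(h)\partial_t h.
\]
Writing $w\defn B(h)h$, so that $V(h)h=(1-w)\nabla h$, and using $G(h)h=-\partial_t h$ together with the linearity of $G(h)$, the right-hand side reduces to $\partial_t^2 h=-G(h)\mu+\cnx(\mu\nabla h)$ with $\mu\defn(1-w)\partial_t h$. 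Adding $\Delta_x h=-G(h)\mathbf{1}+\cnx(\mathbf{1}\cdot\nabla h)$ and applying the identity of the first paragraph with $f=\mu+1$,
\[
\Delta_{t,x}h=-G(h)(\mu+1)+\cnx\big((\mu+1)\nabla h\big)=-B(h)^*\big((1+\la\nabla h\ra^2)(\mu+1)\big).
\]
Rewriting $G(h)h=-\partial_t h$ in terms of $w$ yields $(1+\la\nabla h\ra^2)(1-w)=1-G(h)h=1+\partial_t h$, hence $(1+\la\nabla h\ra^2)(\mu+1)=\partial_t h\,(1+\partial_t h)+1+\la\nabla h\ra^2=\la\nabla_{t,x}h\ra^2+(1+\partial_t h)$. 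Consequently $\Delta_{t,x}h+B(h)^*\big(\la\nabla_{t,x}h\ra^2\big)=-B(h)^*(1+\partial_t h)$, and it remains only to establish the cancellation $B(h)^*(1+\partial_t h)=0$.

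I expect this last step to be the main point. Using $1+\partial_t h=(1+\la\nabla h\ra^2)(1-w)$ and the formula for $B(h)^*$ once more, $B(h)^*(1+\partial_t h)=G(h)(1-w)-\cnx\big((1-w)\nabla h\big)$. Since $(1-w)\nabla h=V(h)h$, it suffices to invoke the elementary identity $\cnx\big(V(h)\psi\big)=-G(h)\big(B(h)\psi\big)$, which follows from the chain rule for $x\mapsto(\nabla_x\varphi)(x,h(x))$, the harmonicity $\Delta_x\varphi=-\partial_y^2\varphi$, and the fact that $\partial_y\varphi$ is the harmonic extension of its own trace $B(h)\psi$. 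With $\psi=h$ this reads $\cnx\big((1-w)\nabla h\big)=-G(h)(w)$, so that $B(h)^*(1+\partial_t h)=G(h)(1-w)+G(h)(w)=G(h)\mathbf{1}=0$, which completes the proof. The only other delicate point is keeping exact track of the boundary terms throughout the shape-derivative computation; the regularity making $\partial_t\varphi$, $\partial_y\varphi$ and $\nabla_x\varphi$ bona fide harmonic extensions of their traces is provided by the parabolic smoothing mentioned in the introduction.
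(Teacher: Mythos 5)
Your proof is correct and rests on exactly the same three ingredients as the paper's: differentiating the equation in time with the shape-derivative formula, the identity $\cnx V(h)\psi=-G(h)B(h)\psi$ (i.e.\ \eqref{n991}), and the explicit formula for $B(h)^*$ (i.e.\ \eqref{n998}). What differs is the bookkeeping, and the difference is instructive. You package \eqref{n998} once and for all as the ``absorption lemma''
$$
-G(h)f+\cnx(f\nabla h)=-B(h)^*\big((1+\la\nabla h\ra^2)f\big),
$$
apply it first to $f=1$ to get $\Delta_x h=-B(h)^*(1+\la\nabla h\ra^2)$, then to $f=\mu=(1-B)\partial_t h$ to collapse $\partial_t^2h$, and only afterwards invoke $\cnx V=-G(h)B$ to kill the residual term $B(h)^*(1+\partial_t h)$. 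The paper instead injects $\cnx V=-G(h)B$ in the middle of the computation to surface $\Delta_x h$ directly, then does pointwise algebra to reach the quantity $B^2+\la V\ra^2=\la\nabla_{t,x}h\ra^2/(1+\la\nabla h\ra^2)$. Your route buys a slightly cleaner structure (every term is immediately recognised as a $B(h)^*$ of something) at the cost of a deferred cancellation, and it isolates as a byproduct the pleasant identity $B(h)^*\big((1+\la\nabla h\ra^2)(1-B)\big)=0$, which is exactly \eqref{n991} re-read through the adjoint. Both are valid proofs of the theorem, differing in the order in which the same identities are consumed.
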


\textbf{An entropy inequality.} 
Then we study the role of convexity to control 
the spatial derivatives of the solutions. We consider the 
Rayleigh--Taylor coefficient $a$, which is a positive function defined by 
$a=-(\partial_y P)\ah$.
It is known that this coefficient is always positive 
when the free surface is at least 
$C^{1,\alpha}$ for some $\alpha>0$ (see~\cite[Prop.\ 4.3]{AMS}). As a consequence, we may consider 
$\sqrt{a}$ and $\log(a)$. Inspired by the study of entropies for elliptic or parabolic equations, we consider the convex function $\varphi(x)=x\log x$ and find 
that $\varphi(1/\sqrt{a})$ is a sub-solution of a well-posed equation. 

\begin{proposition}\label{P:entropy} 
Introduce the operator $L(h)$ defined by
$$
L(h)f=-V\cdot \nabla f-\mez (\cnx V)f
+\sqrt{a}G(h)\big(\sqrt{a}f\big).
$$
The function 
$$
v\defn \frac{1}{\sqrt{a}}\log\left(\frac{1}{\sqrt{a}}\right)
$$
satisfies
\be\label{ineg:aloga}
\partial_t v+L(h)v+cv=f,
\ee
where $f(t,x)\le 0$ and $c=c(t,x)\ge 0$.
\end{proposition}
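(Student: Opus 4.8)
The plan is to derive \eqref{ineg:aloga} from an exact evolution equation for the Rayleigh--Taylor coefficient, by composing with the two convex functions $a\mapsto a^{-1/2}$ and $\varphi(x)=x\log x$, and by exploiting that $G(h)$, being a Dirichlet--to--Neumann operator, satisfies the positive maximum principle (a consequence of the maximum principle and the Hopf lemma for the harmonic extension). This yields, for every convex $\psi\in C^2$, the pointwise Jensen inequality
\be\label{plan:jensen}
\psi'(u)\,G(h)u-G(h)\big(\psi(u)\big)\ge 0
\ee
(with the reverse inequality when $\psi$ is concave): indeed the function $\psi(u(x_0))+\psi'(u(x_0))(u-u(x_0))-\psi(u)$ is $\le0$ with equality at $x_0$, so it attains its maximum there, and applying the positive maximum principle to it at an arbitrary point $x_0$ gives \eqref{plan:jensen}. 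Everywhere we use that $a>0$ on the free surface, by \cite[Prop.~4.3]{AMS}, so that $a^{-1/2}$, $\log a$ and $\varphi(1/\sqrt a)$ are smooth; this positivity is exactly what makes the change of unknown $a\mapsto\varphi(1/\sqrt a)$ legitimate.

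\textbf{Step 1: the equation for $a$.} Recall $a=1+\partial_y\phi\ah$, where $\phi$ is the harmonic extension of $h$, and that $V=(\nabla\phi)\ah-(\partial_y\phi\ah)\nabla h$ is the trace of the horizontal velocity. Differentiating $a=1+(\partial_y\phi)(t,x,h(t,x))$ in time is a shape--derivative computation of exactly the type behind Theorem~\ref{proposition:elliptic}: $\partial_t\phi$ is again harmonic in $\Omega(t)$, its boundary trace $\partial_t\phi\ah=(1-\partial_y\phi\ah)\partial_t h$ is obtained by differentiating $\phi(t,x,h(t,x))=h(t,x)$, and one converts vertical into horizontal derivatives through $\partial_y^2\phi=-\Delta_x\phi$. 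Re--expressing the outcome by means of the Hele--Shaw equation $\partial_t h+G(h)h=0$ and the structural identities relating $G(h)$, $B(h)$ and $V$, one reaches an evolution equation for $1/a$ of the form
\be\label{plan:aeq}
\partial_t\!\left(\frac1a\right)+L(h)\!\left(\frac1a\right)=N,
\ee
where $N=N(t,x)$ is an explicit remainder, quadratic in the ``small'' quantities; the linearization about the flat state ($a=1+|D_x|h+O(h^2)$, $V=O(h)$) reduces \eqref{plan:aeq} to $\partial_t a+|D_x|a=0$ with $N=O(h^2)$, consistent with $f=c=0$ at first order. Establishing \eqref{plan:aeq} with good control on the algebraic structure of $N$ is the bulk of the work.

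\textbf{Step 2: passing to $v$.} Put $b\defn a^{-1/2}$, so that $\sqrt a\,b=1$ and $\sqrt a\,v=\log b$; these identities make the conjugations by $\sqrt a$ in $L(h)$ telescope, since $G(h)(\sqrt a\,b)=G(h)(1)=0$. From \eqref{plan:aeq} and $1/a=b^2$ one extracts $\partial_t b-V\cdot\nabla b=\tfrac{1}{2b}\big(N+\tfrac12(\cnx V)b^2-\sqrt a\,G(h)b\big)$; feeding this into the chain rule for $v=\varphi(b)$ and collecting terms gives
\be\label{plan:veq}
\partial_t v+L(h)v=\frac{\varphi'(b)}{2b}\,N+\tfrac14(\cnx V)(b-v)+\sqrt a\,\Big(G(h)(\log b)-\frac{\varphi'(b)}{2b}\,G(h)b\Big).
\ee
It then remains to check that the right-hand side of \eqref{plan:veq} is of the form $f-cv$ with $f\le0$ and $c\ge0$: the transport part produced no defect; the convexity of $a\mapsto a^{-1/2}$ fixes the sign of the $N$-contribution once $N$ is put in its natural form; the nonlocal terms — those displayed and those still hidden inside $N$ — recombine, via \eqref{plan:jensen} applied to a suitable convex function, into a nonpositive contribution; and the zeroth-order terms built from $\cnx V$ split off a piece $cv$ with $c=c(t,x)\ge0$, the rest being nonpositive. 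Gathering all nonpositive terms into $f$ yields \eqref{ineg:aloga}.

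\textbf{The main obstacle} is the combination of two points. First, one must derive \eqref{plan:aeq} with $N$ in a form precise enough that, after the two changes of unknown leading to \eqref{plan:veq}, the only surviving terms are $L(h)v$, a manifestly nonnegative multiple of $v$, and a manifestly nonpositive function; this is where the specific entropy $\varphi(x)=x\log x$ — rather than an arbitrary convex $\varphi$ — is forced, since only this choice makes the bad-sign part of $N$ cancel against the convexity defects. Second, one must justify and correctly iterate the Jensen inequality \eqref{plan:jensen} for $G(h)$ in the presence of the conjugation by $\sqrt a$; here the positivity of $a$ is used once more, as is the positivity of the Dirichlet--to--Neumann operator.
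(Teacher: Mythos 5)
Your sketch correctly identifies the three ingredients that drive the paper's argument: an exact evolution equation for the Rayleigh--Taylor coefficient, the pointwise Jensen/C\'ordoba--C\'ordoba inequality for $G(h)$ (which you rederive correctly from the positive maximum principle), and the positivity of $a$. However, the core of the proposition is left unexecuted, and the intermediate equation you posit is not the right one. Concretely, you claim in Step~1 that $1/a$ satisfies $\partial_t(1/a)+L(h)(1/a)=N$; but the operator $L(h)$, with its conjugation by $\sqrt a$, does not arise at the level of $1/a$. What actually comes out of the shape-derivative computation (this is Proposition~\ref{P:25} of the paper, quoted from~\cite{AMS}) is the equation for $B=1-a$, namely $\partial_t B-V\cdot\nabla B+aG(h)B=\gamma$ with the crucial sign property $\gamma\le 0$, equivalently $\partial_t a-V\cdot\nabla a+aG(h)a+\gamma=0$. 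Dividing by $a$ gives $(\partial_t-V\cdot\nabla)\log a+G(h)a+\gamma/a=0$, and it is only after applying the concavity of $\log$ ($G(h)a\le a\,G(h)\log a$), computing $(\partial_t-V\cdot\nabla)(1/\sqrt a)=\tfrac12(a\,\cnx V+\gamma)/(a\sqrt a)$ via the identity $G(h)a=\cnx V$, and combining the two by Leibniz, that the operator $L(h)$ and the signed coefficients $c=-\gamma/(2a)\ge 0$, $f=\gamma/(a\sqrt a)\le 0$ emerge. You never reach the point of identifying $c$ and $f$ explicitly, and you never invoke the sign $\gamma\le 0$, which is the single fact the whole result hinges on.

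So the proposal as written has a genuine gap: Step~1's claimed form of the $1/a$ equation is not established and is not the natural outcome of the computation, and Steps~2--3 (which you acknowledge as ``the bulk of the work'' and ``it then remains to check'') are exactly where the signs must be verified and are not carried out. To repair it, replace the ansatz $\partial_t(1/a)+L(h)(1/a)=N$ by the known equation for $B$ with its signed source $\gamma\le 0$, divide by $a$, apply the concavity inequality to $\log$, and then compute $(\partial_t-V\cdot\nabla)$ of the product $\log(a)\cdot a^{-1/2}$ by Leibniz; the operator $L(h)$ and the nonnegative $c$ and nonpositive $f$ then appear automatically. Your observation that $G(h)(\sqrt a\cdot a^{-1/2})=G(h)(1)=0$ is a correct and useful ingredient, but it is one algebraic simplification within that computation, not a substitute for it.
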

\begin{remark} 
Observe that $L(h)$ is a non-negative 
operator. For any function $f$, it follows from the inequality~\e{positivityDN} below that
$$
\int_{\xT^n} fL(h)f\dx=\int (\sqrt{a}f) G(h)(\sqrt{a}f)\dx\ge 0.
$$
\end{remark}
The main interest of the previous result 
lies in the fact that it was surprising 
to find an equation involving derivatives of the unknown where both $c$ and $f$ have favorable signs 
(for other candidates, one obtains equations of the form~\e{ineg:aloga} 
where either $f$ has no sign or $c\le 0$).  
As an application of the previous entropy inequality, we 
will give an alternate proof of the following 
result first proved in~\cite{AMS}.

\begin{corollary}\label{T:Bp}
Let $n\ge 1$ and consider a regular solution $h$ to 
the Hele-Shaw equation defined on $[0,T]$. 
Then, for all time $t$ in $[0,T]$,
$$
\inf_{x\in\xT^n}a(t,x)\ge \inf_{x\in \xT^n}a(0,x).
$$
\end{corollary}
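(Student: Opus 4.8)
The plan is to exploit Proposition \ref{P:entropy} together with the maximum principle for the elliptic-type operator $L(h)$. Set $v = \frac{1}{\sqrt a}\log\!\bigl(\frac{1}{\sqrt a}\bigr)$ and recall from the proposition that $\partial_t v + L(h)v + cv = f$ with $f \le 0$ and $c \ge 0$. The key structural fact is that $L(h)$ satisfies a weak maximum principle: its zeroth-order part comes from $-\frac12(\cnx V)$ together with the non-negative, non-local operator $\sqrt a\,G(h)(\sqrt a\,\cdot\,)$, and the transport term $-V\cdot\nabla$ and the term $-\frac12(\cnx V)$ together form (the adjoint half of) a conservative transport operator. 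Concretely, I would show that if $v$ attains a spatial maximum at a point $x_0$ at time $t$, then $(L(h)v)(t,x_0) \ge 0$: the transport term vanishes since $\nabla v(t,x_0)=0$; the term $-\frac12(\cnx V)(t,x_0)v(t,x_0)$ must be handled, so it is cleaner to first massage $L(h)$ into a form where the maximum principle is transparent, or to argue via the positivity inequality $\int w\,G(h)w \ge 0$ applied after multiplying \eqref{ineg:aloga} by a suitable test function.

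Rather than a pointwise argument, I expect the robust route is an integral/ODE argument on $\sup_x v(t,x)$, or equivalently on $\inf_x a(t,x)$. Since $\varphi(s)=s\log s$ is not monotone, one must be careful: $v$ is large where $1/\sqrt a$ is large, i.e.\ where $a$ is small, but only once $1/\sqrt a > 1$, i.e.\ $a<1$. So I would first reduce to that regime: if $\inf_x a(0,x)\ge 1$ the statement would need a separate (easier) treatment, so more carefully I would work with a shifted/rescaled quantity, or simply observe that the conclusion $\inf_t a(t,\cdot)\ge \inf a(0,\cdot)$ only needs to be proved when the infimum is attained in the region where $\varphi$ is increasing on the relevant range; the convexity and the precise monotonicity of $\varphi$ on $(0,\infty)$ is exactly why $x\log x$ was chosen over, say, $\log(1/\sqrt a)$ or $1/\sqrt a$ alone. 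The mechanism is: at a time $t_\star$ and point $x_\star$ realizing a new maximum of $v$ (hence, in the good range, a new minimum of $a$), one gets $\partial_t v(t_\star,x_\star) \le f - cv \le 0$ using $c\ge 0$, $v\ge 0$ there, and $f\le 0$ — which contradicts $v$ increasing. This yields $\sup_x v(t,x) \le \sup_x v(0,x)$, and then monotonicity of $\varphi$ on the range of $1/\sqrt a$ converts this back to $\inf_x a(t,x)\ge \inf_x a(0,x)$.

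In detail the steps are: (1) recall $a$ is bounded below by a positive constant on $[0,T]$ (from \cite[Prop.\ 4.3]{AMS}, already cited), so all quantities are smooth and the quantities $\sqrt a$, $\log a$, $v$ make sense and are $C^\infty$ in $t$ and $x$ for $t>0$; (2) apply Proposition \ref{P:entropy} to get \eqref{ineg:aloga}; (3) establish the weak maximum principle for $\partial_t + L(h) + c$ with $c\ge0$ acting on non-negative data, i.e.\ if $\partial_t v + L(h)v + cv \le 0$ then $\sup_x v(t,x)$ is non-increasing — here the point is that at an interior spatial max $x_0$ of $v(t,\cdot)$ one has $\nabla v(t,x_0)=0$ and $G(h)\bigl(\sqrt a\, v\bigr)(x_0)$ contributes with the correct sign after writing $\sqrt a(x_0)\,G(h)(\sqrt a v)(x_0) = \sqrt a(x_0)\,G(h)(\sqrt a v)(x_0)$ and using that $G(h)$ of a function evaluated at its maximum is $\ge 0$ (a standard property of the Dirichlet-to-Neumann operator) applied to $\sqrt a v$, whose maximum over $x$ coincides with $x_0$ when $v\ge0$ and $\sqrt a$ is handled via the extra $-\frac12\cnx V$ term; (4) conclude $\sup_x v(t,x)\le \sup_x v(0,x)$; (5) translate via the monotonicity of $\varphi$ back to $\inf_x a(t,x)\ge\inf_x a(0,x)$.

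The main obstacle will be step (3): making the maximum principle for the non-local operator $L(h)$ fully rigorous at the point where the spatial maximum is attained, in particular controlling the $\sqrt a$ weights inside and outside $G(h)$ and the sign of the $-\frac12(\cnx V)v$ term, and handling the range-of-$\varphi$ issue cleanly so that maximizing $v$ really does correspond to minimizing $a$. I expect this is why the argument in \cite{AMS} was delicate, and the gain here is that \eqref{ineg:aloga} packages all the bad terms into a single $f\le0$ plus a friendly $c\ge0$.
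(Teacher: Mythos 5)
Your plan leans on the right idea (use Proposition~\ref{P:entropy} as a differential inequality for a function of $a$), but the route you sketch has two real gaps, and the paper's proof resolves both of them by a different mechanism.

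First, the pointwise maximum principle for $\partial_t+L(h)+c$ is not justified. At a spatial maximum $x_0$ of $v$ the transport term does vanish, but the nonlocal term is $\sqrt{a(x_0)}\,G(h)\bigl(\sqrt{a}\,v\bigr)(x_0)$, and $x_0$ need not be a maximum of $\sqrt{a}\,v$ (the weight $\sqrt a$ can shift it). So the standard ``$G(h)$ of a function at its maximum is $\ge0$'' fact does not apply; you flag this yourself as the ``main obstacle'' and it is exactly where the argument would break. The $-\tfrac12(\cnx V)v$ term has no sign at $x_0$ either. Second, $\varphi(s)=s\log s$ is not monotone on $(0,\infty)$, so $\sup_x v(t,\cdot)\le\sup_x v(0,\cdot)$ does not translate back to $\inf_x a(t,\cdot)\ge\inf_x a(0,\cdot)$ unless $1/\sqrt a$ stays in the increasing range of $\varphi$; you note the issue but do not resolve it, and resolving it pointwise is genuinely awkward.

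The paper's proof avoids both problems simultaneously by using the \emph{parametrized} version of the entropy estimate (the intermediate Proposition gives $\partial_t u+L(h)u-\tfrac{\gamma}{2a}u\ge0$ for $u=\log(ma)/\sqrt a$ for \emph{any} $m>0$) and then choosing $m=1/c_0$ with $c_0=\inf_x a(0,x)$, so that $u(0,\cdot)\ge0$. With that normalization the conclusion becomes the one-sided claim $u\ge0$ for all $t$, which removes the need for any monotonicity of $\varphi$: one only needs to show the negative part $u_-=\min\{u,0\}$ stays identically zero. This is done by Stampacchia's $L^2$ method, not a pointwise maximum principle: multiply the differential inequality by $u_-$ and integrate. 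The transport terms integrate to zero (since $u_-\nabla u=\tfrac12\nabla(u_-^2)$ and $u_-u=u_-^2$), the $\gamma$-term has the right sign since $\gamma\le0$, and the nonlocal term is handled by the convexity inequality \eqref{n1105} applied with $\Phi(x)=x^2\mathbf{1}_{\xR_-}(x)$ to the function $\sqrt a\,u$, giving $\int\sqrt a\,u_-\,G(h)(\sqrt a\,u)\,\dx\ge0$. This yields $\fract\int u_-^2\,\dx\le0$ with $u_-(0,\cdot)=0$, hence $u_-\equiv0$. I would encourage you to replace the pointwise maximum-principle attempt in step (3) by this integral (Stampacchia) argument and to use the free parameter $m$ to normalize the initial datum; that is what makes the argument close.
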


\textbf{To Walter.} 
With Guy M\'etivier (\cite{AM}), we started working on the water-waves equations and the Dirichlet-to-Neumann operator by reading 
a very well-written paper, in French, by Walter Craig and Ana-Maria Matei (\cite{CM}). 
Over the years, I met Walter frequently during conferences, in Canada 
or during his visits in France. 
He was always generous with his ideas. His original points of view, 
his enthusiasm and 
his questions deeply influenced me. I wish I could thank him 
one more time for all he did to help me. 

\section{Preliminaries} 
In this section we review several results about the Dirichlet-to-Neumann operator 
as well as 
some identities proved in~\cite{AMS} about the Hele-Shaw equation.

\subsection{The Dirichlet-to-Neumann operator}\label{S:21}
In this paragraph the time variable is seen as a parameter and we skip it. 
We denote by $H^s(\xT^n)$ the Sobolev space of periodic functions $u$ 
such that $(I-\Delta)^{s/2}u$ belongs to 
$L^2(\xT^n)$, where $(I-\Delta)^{s/2}$ is the Fourier multiplier with symbol $(1+\la\xi\ra^2)^{s/2}$. 

Now consider a smooth function $h \in C^{\infty}(\xT^n)$ 
and a function $\psi$ in the Sobolev 
space $H^{\mez}(\xT^n)$. 
Then it follows from classical arguments that there is a 
unique variational solution 
$\varphi$ to the problem
\be\label{defi:phi}
\Delta_{x,y}\varphi=0 \quad\text{in }\Omega=\{y<h(x)\},\quad 
\varphi\arrowvert_{y=h}=\psi.
\ee
Notice that $\nabla_{x,y}\varphi$ belongs only to $L^2(\Omega)$, 
so it is not obvious that one can consider the trace 
$\partial_n \varphi\arrowvert_{\partial\Omega}$. However, 
since $\Delta_{x,y}\varphi=0$, one can express the normal 
derivative in terms of the tangential derivatives and 
$\sqrt{1+|\nabla h|^2}\partial_n \varphi\arrowvert_{\partial\Omega}$ 
is well-defined and belongs to $H^{-\mez}(\xT^n)$. 
As a result, one can define the Dirichlet-to-Neumann operator $G(h)$ by
$$
G(h)\psi (x)=\sqrt{1+|\nabla h|^2}\partial_n\varphi\arrowvert_{y=h(x)}
=\partial_y\varphi(x,h(x))-\nabla h(x)\cdot\nabla\varphi(x,h(x)).
$$
Let us recall two results. Firstly, 
it follows from classical elliptic regularity results that, for any 
$s\ge 1/2$, $G(h)$ is bounded from $H^s(\xT^n)$ into 
$H^{s-1}(\xT^n)$. This property still holds in 
the case where $h$ has limited regularity. Many results have 
been obtained since the pioneering works of 
Craig and Nicholls (\cite{CN}; see also  \cite{WuInvent,Gunther-Prokert-SIAM-2006,LannesJAMS}). 
It is known that (see~\cite{ABZ3,LannesLivre}), for any $s>n/2+1$,
\be\label{DN:Sobolev}
\lA G(h)\psi\rA_{H^{s-1}}\le C\big(\lA h\rA_{H^{s}}\big)\lA \psi\rA_{H^{s}}.
\ee
Secondly, we will frequently use the fact that $G(h)$ is a positive operator. 
Namely, 
consider a function $\psi=\psi(x)$ and its 
harmonic extension~$\varphi=\varphi(x,y)$, solution to~\e{defi:phi}. It follows from Stokes' theorem that
\be\label{positivityDN}
\int_{\xT^n} \psi G(h)\psi\dx=\int_{\partial\Omega}\varphi \partial_n \varphi\dsigma=
\iint_{\Omega}\la\nabla_{x,y}\varphi\ra^2\dydx\ge 0.
\ee

In addition to the Dirichlet-to-Neumann operator, 
we will use the operators $B(h), V(h)$ defined by
\begin{align*}
B(h)\psi&=\partial_y \varphi\ah,\\
V(h)\psi&=(\nabla_x \varphi)\ah,
\end{align*}
where again $\varphi$ is the harmonic extension of $\psi$ given by \e{defi:phi}. 

We recall the following identities. 

\begin{lemma}\label{L:31}
We have
\be\label{n897}
B(h)\psi=\frac{G(h)\psi+\nabla h\cdot\nabla \psi}{1+\la \nabla h\ra^2},\qquad 
V(h)\psi=\nabla \psi-(B(h)\psi) \nabla h,
\ee
and
\be\label{n991}
G(h)B(h)\psi=-\cnx V(h)\psi.
\ee
\end{lemma}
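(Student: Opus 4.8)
The plan is to derive the three identities directly from the chain rule and the harmonicity of $\varphi$, establishing first the two formulas in~\e{n897} and then~\e{n991}.

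First, to get the second identity in~\e{n897}, I would differentiate the boundary condition $\varphi(x,h(x))=\psi(x)$ in $x$. The chain rule gives $(\nabla_x\varphi)(x,h(x))+(\partial_y\varphi)(x,h(x))\nabla h(x)=\nabla\psi(x)$, which is exactly $V(h)\psi+(B(h)\psi)\nabla h=\nabla\psi$. Then, starting from the definition $G(h)\psi=(\partial_y\varphi)\ah-\nabla h\cdot(\nabla_x\varphi)\ah=B(h)\psi-\nabla h\cdot V(h)\psi$ and substituting the expression just obtained for $V(h)\psi$, one finds $G(h)\psi=(1+\la\nabla h\ra^2)B(h)\psi-\nabla h\cdot\nabla\psi$; solving for $B(h)\psi$ yields the first identity in~\e{n897}.

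The substantial point is~\e{n991}. The key observation is that $\partial_y\varphi$ is itself harmonic in $\Omega$, being a derivative of the harmonic function $\varphi$, and that its trace on $\{y=h\}$ is by definition $B(h)\psi$; hence $\partial_y\varphi$ is the harmonic extension of $B(h)\psi$. Applying the definition of the Dirichlet-to-Neumann operator to this function gives $G(h)B(h)\psi=(\partial_y^2\varphi)\ah-\nabla h\cdot(\nabla_x\partial_y\varphi)\ah$, and using $\Delta_{x,y}\varphi=0$ to replace $\partial_y^2\varphi$ by $-\Delta_x\varphi$ turns this into $G(h)B(h)\psi=-(\Delta_x\varphi)\ah-\nabla h\cdot(\nabla_x\partial_y\varphi)\ah$. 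On the other hand, differentiating the $j$-th component $(\partial_{x_j}\varphi)(x,h(x))$ of $V(h)\psi$ and summing over $j$, the chain rule gives $\cnx V(h)\psi=(\Delta_x\varphi)\ah+\nabla h\cdot(\nabla_x\partial_y\varphi)\ah$, which is precisely $-G(h)B(h)\psi$. This proves~\e{n991}.

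The only real obstacle is a matter of functional framework rather than computation: one must check that $\partial_y\varphi$ belongs to the variational space used to define $G(h)$, so that the phrase "harmonic extension of $B(h)\psi$" is legitimate and the tangential-derivative expression for the normal-derivative trace applies, and that all the traces written above make sense. For the smooth data considered here this is a consequence of elliptic regularity up to the boundary (and the decay as $y\to-\infty$), after which the identities reduce to the elementary chain-rule manipulations above.
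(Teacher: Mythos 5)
Your derivation of the two identities in \eqref{n897} matches the paper's proof exactly: differentiate the boundary condition $\varphi(x,h(x))=\psi(x)$ to get $\nabla\psi=V(h)\psi+(B(h)\psi)\nabla h$, then substitute this into $G(h)\psi=B(h)\psi-\nabla h\cdot V(h)\psi$ and solve for $B(h)\psi$. Where you genuinely diverge is on \eqref{n991}: the paper does not prove this identity at all, but simply cites \cite{ABZ3,BLS,LannesJAMS} and Proposition~5.1 of \cite{AMS}. You supply a complete and correct argument instead — observe that $\partial_y\varphi$ is harmonic with trace $B(h)\psi$ and (given the decay that pins down the variational solution) is therefore the harmonic extension of $B(h)\psi$; apply the definition of $G(h)$ to it, use $\partial_y^2\varphi=-\Delta_x\varphi$, and match against the chain-rule expansion of $\cnx V(h)\psi=(\Delta_x\varphi)\ah+\nabla h\cdot(\nabla_x\partial_y\varphi)\ah$. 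Your caveat that one must check $\partial_y\varphi$ lies in the variational class (so that the identification ``harmonic extension of $B(h)\psi$'' and the tangential formula for the normal trace are legitimate) is exactly the right point to flag, and it is settled by elliptic regularity for the smooth $h,\psi$ considered here. In effect you have reproduced the standard proof of \eqref{n991} from the cited references, which the paper chose to delegate rather than repeat; this is a sound and arguably more self-contained treatment of the lemma.
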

\begin{proof}
By definition of the operator $G(h)$,
\be\label{n997}
G(h)h=\big( \partial_y \varphi-\nabla h\cdot \nabla \varphi\big)\ah=B(h)\psi-\nabla h\cdot 
V(h)\psi. 
\ee
On the other hand, it follows from the chain rule that
$$
\nabla \psi=\nabla_x (\varphi\ah)=(\nabla_x \varphi\ah)
+(\partial_y\varphi)\ah \nabla h=V(h)\psi+(B(h)\psi)\nabla h. 
$$
Consequently, we obtain the wanted identity for $V(h)\psi$:
$$
V(h)\psi=\nabla \psi -(B(h)\psi)\nabla h.
$$
Now, by reporting this formula in \e{n997} we get
$$
G(h)h=(1+\la \nabla h\ra^2)B(h)\psi-\nabla\psi \cdot\nabla h,
$$
which immediately implies the desired result for $B(h)\psi$. 

The identity \e{n991} is proved in~\cite{ABZ3,BLS,LannesJAMS}, 
see also Proposition~5.1 in~\cite{AMS}.
\end{proof}

\subsection{A reformulation}\label{S:22}
In this paragraph, we give more details about 
the formulation of the Hele-Shaw equation in terms of 
the Dirichlet-to-Neumann operator given in the introduction. 

The Dirichlet-to-Neumann operator 
plays a key role in the study of the water-waves problem 
since the seminal works of Zakharov~\cite{Zakharov1968} and Craig and Sulem~\cite{CrSu}. 
It enters also in a very natural way in the study of the Hele-Shaw equation. Recall from the introduction that
$$
v=-\nabla_{x,y}\phi\quad\text{with}\quad \phi=P+y.
$$
Since $\cnx_{x,y}v=0$ and since $P\arrowvert_{y=h}=0$, 
the potential $\phi$ satisfies
$$ 
\Delta_{x,y} \phi=0,\qquad \phi\arrowvert_{y=h}=h.
$$
We conclude that $\phi$ is the harmonic extension of $g h$, which implies that
$$
\sqrt{1+|\partialx h|^2}\, v\cdot n=-G(h)h.
$$
Consequently, the evolution equation for $h$ simplifies to
\begin{equation}\label{n10}
\partial_{t}h+G(h)h=0.
\end{equation}
Recall from \e{DN:Sobolev} that $G(h)h$ is well-defined whenever $h$ 
takes values in $H^s(\xT^n)$ for some $s>n/2+1$. 
The following result allows to solve the Cauchy problem in this general setting.

\begin{theorem}[from \cite{AMS,NPausader}]\label{T:Cauchy}
Let $n\ge 1$ and consider a real number $s>n/2+1$. 
For any initial data $h_0$ in $H^s(\xT^n)$, there exists 
a time $T>0$ such that the Cauchy problem
\begin{equation}\label{Hele-Shaw100}
\partial_{t}h+G(h)h=0,\quad h\arrowvert_{t=0}=h_0,
\end{equation}
has a unique solution satisfying
$$
h \in C^0([0,T];H^s(\xT^n))\cap C^1([0,T];H^{s-1}(\xT^n))\cap L^2([0,T];H^{s+\mez}(\xT^n)).
$$
Morevoer, $h$ belongs to $C^\infty((0,T]\times \xT^n)$. 
\end{theorem}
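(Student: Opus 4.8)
The plan is to treat $\partial_t h+G(h)h=0$ as a quasilinear parabolic equation of order one and run the standard paradifferential-plus-fixed-point scheme. The model is the linearized equation $\partial_t h+\lvert D_x\rvert h=0$, whose solution $e^{-t\lvert D_x\rvert}h_0$ gains regularity for $t>0$ and, because $\int_0^{\infty}\la\xi\ra e^{-2t\la\xi\ra}\dt=\mez$, satisfies the maximal-regularity bound $\lA h\rA_{L^2([0,T];H^{s+\mez})}\les\lA h_0\rA_{H^s}$ on top of $\lA h\rA_{C^0([0,T];H^s)}\les\lA h_0\rA_{H^s}$; these are exactly the spaces in the statement. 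To reach the nonlinear equation one first paralinearizes the Dirichlet--to--Neumann operator: for $h$ bounded in $H^s$, $s>n/2+1$, one has $G(h)=T_{\lambda}+R(h)$ with
$$
\lambda(x,\xi)=\sqrt{(1+\la\nabla h(x)\ra^2)\la\xi\ra^2-(\nabla h(x)\cdot\xi)^2},
$$
$T_{\lambda}$ the paradifferential operator of symbol $\lambda$, and $R(h)$ a remainder of order $<1$ obeying tame bounds like $\lA R(h)\psi\rA_{H^{s-\mez}}\les C(\lA h\rA_{H^s})\lA\psi\rA_{H^s}$ (this is the paralinearization analysis underlying \cite{ABZ3,LannesLivre}). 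The symbol $\lambda$ is real, homogeneous of degree $1$ in $\xi$, and elliptic, $\lambda(x,\xi)\ge c\la\xi\ra$ with $c=c(\lA\nabla h\rA_{L^{\infty}})>0$ --- the structural source of the parabolic smoothing --- and the equation becomes $\partial_t h+T_{\lambda(h)}h=f(h)$, where $f(h)$ gathers $-R(h)h$ and the paralinearization error, terms of order $\le s-\mez$ that are controlled in $L^2([0,T];H^{s-\mez})$ by $\lA h\rA_{L^2([0,T];H^{s+\mez})}$.

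Next comes the parabolic a priori estimate. Since $G(h)$ is self-adjoint with $\int\psi G(h)\psi\dx\ge 0$ by \eqref{positivityDN} and $\lambda$ is elliptic, a Gårding inequality gives $\RE\langle T_{\lambda(h)}u,u\rangle\ge c\lA u\rA_{H^{\mez}}^2-C\lA u\rA_{L^2}^2$; commuting $\langle D\rangle^s$ through the equation by symbolic calculus and absorbing $f(h)$ with the half-derivative gain yields, for smooth solutions,
$$
\fract\lA h\rA_{H^s}^2+c\lA h\rA_{H^{s+\mez}}^2\le\mathcal F\big(\lA h\rA_{H^s}\big)
$$
for a continuous increasing $\mathcal F$. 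Integrating provides a lifespan $T=T(\lA h_0\rA_{H^s})>0$ and the a priori control in $C^0([0,T];H^s)\cap L^2([0,T];H^{s+\mez})$; the bound in $C^1([0,T];H^{s-1})$ then follows by reading $\partial_t h=-G(h)h$ and using \eqref{DN:Sobolev}.

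For existence and uniqueness I would contract in $X_T\defn C^0([0,T];H^s(\xT^n))\cap L^2([0,T];H^{s+\mez}(\xT^n))$: to $\underline h\in X_T$ associate the solution $\Phi(\underline h)$ of the \emph{linear} parabolic problem $\partial_t w+T_{\lambda(\underline h)}w=f(\underline h)$, $w\arrowvert_{t=0}=h_0$, well-posed in $X_T$ by the linear version of the estimate above; the tame dependence of $\lambda(\cdot)$ and $f(\cdot)$ on $\underline h$, together with the fact that a difference $\underline h_1-\underline h_2$ need only be estimated one derivative lower --- where the half-derivative parabolic gain dominates --- shows that $\Phi$ preserves a ball of $X_T$ and contracts for $T$ small, giving a unique fixed point (and uniqueness of the solution by the same difference estimate). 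Alternatively one builds solutions by vanishing viscosity or Galerkin truncation plus the a priori estimate and compactness, with uniqueness again from the difference estimate. The $C^{\infty}$-smoothing for $t>0$ is a bootstrap: the gain $h\in L^2([0,T];H^{s+\mez})$ upgrades $h$ to $H^{s+\mez}$, then $H^{s+1}$, and so on, on any $[\eps,T]$, while $\partial_t h=-G(h)h$ and its differentiated forms supply the time regularity; iterating gives $h\in C^{\infty}((0,T]\times\xT^n)$.

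The main obstacle is the paralinearization step itself: making the nonlocal operator $G(h)=T_{\lambda(h)}+R(h)$ quantitative with tame estimates, and organizing the bookkeeping so that everything that is not the positive elliptic part $T_{\lambda(h)}$ is genuinely subordinate in the scale the parabolic smoothing controls --- that is, so that the half derivative gained from the ellipticity of $\lambda$ strictly beats the derivative loss coming from the $h$-dependence of the symbol and from the commutators. With that in hand, the rest is the routine parabolic fixed-point-and-bootstrap argument.
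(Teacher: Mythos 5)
The paper does not prove this theorem: it is stated as ``Theorem (from \cite{AMS,NPausader})'' and imported wholesale from those references, so there is no in-paper proof to compare against. That said, your outline matches the strategy of the cited works --- in particular \cite{NPausader}, whose very title is ``A paradifferential approach for well-posedness of the Muskat problem,'' follows exactly the route you describe: paralinearize $G(h)$ to extract the elliptic principal symbol $\lambda(x,\xi)=\sqrt{(1+|\nabla h|^2)|\xi|^2-(\nabla h\cdot\xi)^2}$, prove a parabolic a~priori estimate with the half-derivative smoothing via a G\aa rding-type positivity, close a fixed point in $C^0_t H^s\cap L^2_t H^{s+1/2}$ using that differences contract one derivative lower, and bootstrap the $L^2_t H^{s+1/2}$ gain into $C^\infty$ smoothing for $t>0$. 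The only caveat worth flagging is that your sketch leaves the genuinely hard part --- the tame paralinearization of the nonlocal operator $G(h)$ at the threshold regularity $s>n/2+1$, with remainders controlled in the right time-integrated norms --- as a black box pointing to \cite{ABZ3,LannesLivre}, which is precisely where the technical weight of \cite{NPausader} and \cite{AMS} lies; you are aware of this and say so, which is the honest thing to do in a sketch. In short: correct approach, same as the sources the paper delegates to, with the expected technical steps deferred rather than carried out.
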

\begin{definition}\label{defi:regular}
We say that $h$ is a regular solution to \e{Hele-Shaw100} 
if $h$ satisfies the conclusions of the above result on some time interval $[0,T]$.
\end{definition}

\subsection{Equations for the derivatives}

As shown in \cite{AMS}, it is very convenient to work with some special derivatives of the solutions. 
Guided by the analysis in~\cite{ABZ3}, we introduce the horizontal and vertical traces 
of the velocity at the free surface:
\be\label{defi:BVphii}
B= (\partial_y \phi)\arrowvert_{y=h},\quad 
V = (\nabla_x \phi)\arrowvert_{y=h}.
\ee
They are given in terms of $h$ by the following formulas (see Lemma~\ref{L:31}),
\begin{equation}\label{n1201}
B= \frac{G(h)h+\la \partialx h\ra^2}{1+|\partialx  h|^2},
\qquad
V=(1-B)\partialx h.
\end{equation}
We also introduce the Rayleigh--Taylor coefficient $a$ defined by
\be\label{defi:RT}
a=-(\partial_y P)\ah=1-B.
\ee

There are two important positivity results which follow 
from the maximum principle (or the Hopf-Zaremba's principle). 
The first one is the well-known positivity of the Taylor coefficient 
(see~\cite[Prop.\ 4.3]{AMS}). 

\begin{proposition}\label{Coro:Zaremba-Taylor}
For any regular solution $h$ to the Hele-Shaw equation, there holds
$$
a=1-B>0.
$$
\end{proposition}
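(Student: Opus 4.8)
The plan is to apply the maximum principle together with the Hopf--Zaremba principle directly to the pressure. The key observation is that $P=\phi-y$ is harmonic in $\Omega$ and vanishes on $\partial\Omega=\{y=h(x)\}$: indeed $\Delta_{x,y}P=\Delta_{x,y}\phi-\Delta_{x,y}y=0$ since $\phi$ is harmonic, and $P\ah=\phi\ah-h=h-h=0$ since $\phi$ is the harmonic extension of $h$. Moreover, $P$ tends to $+\infty$ at the bottom of the cell: writing $h=\bar h+\tilde h$ with $\bar h$ the average of $h$ and $\tilde h$ of mean zero, one has $\phi=\bar h+\tilde\phi$ with $\tilde\phi$ the harmonic extension of $\tilde h$, and $\tilde\phi$ decays exponentially as $y\to -\infty$; hence $\phi$ is bounded and $P=\phi-y\to+\infty$ as $y\to-\infty$.

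I would then prove that $P>0$ in the interior of $\Omega$. On the truncated bounded domains $\Omega_M=\{(x,y)\,;\,-M<y<h(x)\}$ the function $P$ is harmonic, equals $0$ on the top $\{y=h(x)\}$ and is $\ge \min h+M>0$ on the bottom $\{y=-M\}$ once $M$ is large; the minimum principle gives $P\ge 0$ on $\Omega_M$, and letting $M\to+\infty$ yields $P\ge 0$ on $\Omega$. Since $P$ is harmonic and not identically zero, the strong minimum principle improves this to $P>0$ in the interior.

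Next, at any boundary point $(x,h(x))$ the Hopf--Zaremba principle applies (the free surface is $C^{1,\alpha}$ by the embedding $H^s(\xT^n)\hookrightarrow C^{1,\alpha}$ for $s>n/2+1$, and is $C^\infty$ for $t>0$), so $\partial_n P<0$ on $\partial\Omega$, with $n$ the outward unit normal. To conclude, I would relate $\partial_n P$ to $\partial_y P\ah$: differentiating the identity $P(x,h(x))=0$ gives $(\nabla_x P)\ah=-(\partial_y P\ah)\nabla h$, whence
$$
\sqrt{1+\la\nabla h\ra^2}\,\partial_n P=\partial_y P\ah-\nabla h\cdot(\nabla_x P)\ah=(1+\la\nabla h\ra^2)\,\partial_y P\ah.
$$
Therefore $\partial_y P\ah<0$, which is precisely $a=-(\partial_y P)\ah=1-B>0$.

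The elliptic regularity and the last normal-derivative computation are routine; the point requiring care is the use of the maximum principle on the \emph{unbounded} domain $\Omega$ — that is, controlling $P$ (equivalently the harmonic extension $\phi$) as $y\to-\infty$ — and, at the endpoint $t=0$ where $h$ is only $C^{1,\alpha}$, checking that the free surface is regular enough for the Hopf--Zaremba principle to apply.
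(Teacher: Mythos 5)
Your proof is correct and follows exactly the route the paper points to: since the paper does not reprove this statement but cites \cite[Prop.\ 4.3]{AMS} and explicitly attributes it to ``the maximum principle (or the Hopf--Zaremba's principle),'' your argument --- applying the maximum principle to the harmonic function $P=\phi-y$ (which vanishes on the free surface and diverges to $+\infty$ as $y\to-\infty$) on truncated domains, then Hopf--Zaremba to get $\partial_n P<0$, and finally the identity $\sqrt{1+\la\nabla h\ra^2}\,\partial_n P=(1+\la\nabla h\ra^2)(\partial_y P)\ah$ to convert to $a=-(\partial_y P)\ah>0$ --- is precisely that proof. You also correctly flag the two points that require care: the behavior of the variational harmonic extension at $y\to-\infty$ (justifying the truncation argument) and the applicability of the Hopf--Zaremba lemma at the merely $C^{1,\alpha}$ endpoint $t=0$ (a Dini-continuity version of the boundary point lemma is needed there, since the classical interior-ball version requires $C^{1,1}$). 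One small imprecision worth noting: the limit of $\phi$ as $y\to-\infty$ need not equal $\bar h$ exactly, but all you use is boundedness of $\phi$ and the bound $\phi\ge\min h$ (itself a maximum-principle consequence), so the conclusion is unaffected.
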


The next results gives an evolution equation for $B$ and contains a positivity results for a 
coefficient $\gamma$. 
\begin{proposition}[see Prop.\ 5.2 in \cite{AMS}]\label{P:25}
Assume that $h$ is a regular solution to the Hele-Shaw equation. Then $B$ satisfies
\be\label{defi:gamma}
\partial_t B-V\cdot \nabla B +aG(h)B=\gamma,
\ee
where
\be\label{ngammadef}
\gamma=\frac{1}{1+|\nabla h|^2}\Big(G(h)\big(B^2+|V|^2\big)-2BG(h)B-2V\cdot G(h)V\Big).
\ee
Moreover, the coefficient $\gamma$ satisfies
\be\label{ngammasign}
\gamma\le 0.
\ee
\end{proposition}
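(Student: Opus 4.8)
The plan is to prove the two assertions separately: the evolution equation~\e{defi:gamma} comes from differentiating in time the identity $B=(\partial_y\phi)\ah$, while the sign~\e{ngammasign} of $\gamma$ follows from the subharmonicity of $\la\nabla_{x,y}\phi\ra^2$ together with the maximum principle.

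For the evolution equation, I would first record an elementary identity for material derivatives along the flow. Let $\phi$ be the harmonic extension of $h$, so that $v=-\nabla_{x,y}\phi$, $B=(\partial_y\phi)\ah$ and $V=(\nabla_x\phi)\ah$. For any $F=F(t,x,y)$ defined on the fluid domain, with trace $f(t,x)=F(t,x,h(t,x))$, one has
\be\label{plan-matder}
\big(\partial_t F+v\cdot\nabla_{x,y}F\big)\ah=\partial_t f-V\cdot\nabla f ,
\ee
which is obtained by expanding both sides with the chain rule and using that, by Lemma~\ref{L:31}, the equation $\partial_t h+G(h)h=0$ is equivalent to the kinematic relation $\partial_t h=V\cdot\nabla h-B$. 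Applying~\e{plan-matder} with $F=\partial_y\phi$ gives $\partial_t B-V\cdot\nabla B=\big(\partial_t\partial_y\phi+v\cdot\nabla_{x,y}\partial_y\phi\big)\ah$, and, using $v=-\nabla_{x,y}\phi$ and $\Delta_{x,y}\phi=0$, the right-hand side equals $\big(\partial_y\partial_t\phi-\mez\,\partial_y\la\nabla_{x,y}\phi\ra^2\big)\ah$. Now $\partial_t\phi$ is harmonic and, differentiating $\phi\ah=h$ in time, it has trace $a\,\partial_t h$; hence the first term is $B(h)(a\,\partial_t h)$, which is unfolded using~\e{n897} and $\partial_t h=-G(h)h$, and the second term is rewritten in terms of $G(h)B$, $G(h)V$ and $\nabla(B^2+\la V\ra^2)$ using again that $\partial_y\phi$ and the components of $\nabla_x\phi$ are harmonic. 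Collecting everything, the linear part is $-aG(h)B$ and the quadratic remainder is exactly the right-hand side of~\e{ngammadef}. This computation is long but routine; it is the one carried out in~\cite[Prop.~5.2]{AMS}.

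The substance of the proposition is the inequality $\gamma\le0$, for which I would argue by comparison. Put $w\defn\la\nabla_{x,y}\phi\ra^2$ in $\Omega$, so that $w\ah=B^2+\la V\ra^2$. Since $\phi$ is harmonic, so is each first-order derivative $\partial_j\phi$, and therefore
$$
\Delta_{x,y}w=2\la\Hess_{x,y}\phi\ra^2\ge0 ,
$$
i.e.\ $w$ is subharmonic. On the one hand, $G(h)(B^2+\la V\ra^2)$ equals $\sqrt{1+\la\nabla h\ra^2}$ times the outward normal derivative on $\partial\Omega$ of the harmonic extension $\widetilde w$ of $w\ah$. On the other hand, since $\partial_y\phi$ and the components of $\nabla_x\phi$ are themselves harmonic, their images under $G(h)$ are $\sqrt{1+\la\nabla h\ra^2}$ times the corresponding normal derivatives, so that $\sqrt{1+\la\nabla h\ra^2}\,(\partial_n w)\ah=2B\,G(h)B+2V\cdot G(h)V$. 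Subtracting the two relations,
\be\label{plan-gamma-normal}
\big(1+\la\nabla h\ra^2\big)\,\gamma=\sqrt{1+\la\nabla h\ra^2}\,\big(\partial_n(\widetilde w-w)\big)\ah .
\ee
Now $\widetilde w-w$ is superharmonic in $\Omega$ (its Laplacian equals $-2\la\Hess_{x,y}\phi\ra^2\le0$), it vanishes on $\partial\Omega$, and it tends to $0$ as $y\to-\infty$; by the minimum principle it is nonnegative throughout $\Omega$. Since $\Omega=\{y<h\}$ lies below its boundary, a function that vanishes on $\partial\Omega$ and is nonnegative inside has nonpositive outward normal derivative there (Hopf--Zaremba lemma). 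Hence the right-hand side of~\e{plan-gamma-normal} is $\le0$, which gives $\gamma\le0$.

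I expect the obstacle to be of two different kinds. For the evolution equation it is only the organization of the many quadratic terms so that they assemble into~\e{ngammadef}; there is no conceptual difficulty. For the sign of $\gamma$, the delicate point is to make the elliptic comparison rigorous on the unbounded domain $\Omega$---controlling the decay of $\nabla_{x,y}\phi$ as $y\to-\infty$ and the regularity of the traces up to $\partial\Omega$---which is ensured by the smoothing property of regular solutions stated in Theorem~\ref{T:Cauchy}.
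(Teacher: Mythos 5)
The paper itself does not reprove this proposition; it cites Prop.~5.2 of \cite{AMS}. Your reconstruction of the argument is sound and is, in substance, the intended one.

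For the evolution equation, your starting identity
$\big(\partial_t F+v\cdot\nabla_{x,y}F\big)\ah=\partial_t f-V\cdot\nabla f$
is correct (a direct chain-rule computation using $\partial_t h=V\cdot\nabla h-B$ and $v\ah=-(V,B)$), the trace $(\partial_t\phi)\ah=a\,\partial_t h$ is right, and the decomposition
$\partial_t B-V\cdot\nabla B=(\partial_y\partial_t\phi)\ah-\tfrac12(\partial_y\la\nabla_{x,y}\phi\ra^2)\ah
=B(h)(a\partial_t h)-\big(B\,B(h)B+V\cdot B(h)V\big)$
does reduce, after a somewhat tedious rearrangement, to~\e{defi:gamma}. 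To close the bookkeeping one needs, beyond~\e{n897}, the identity $B(h)V=V(h)B=\nabla B-(B(h)B)\nabla h$, which follows from commuting $\partial_y$ with $\nabla_x$ and from the chain rule for the trace of $\partial_y\phi$; without it the $\nabla h\cdot\nabla B$ and $V\cdot G(h)V$ terms do not visibly cancel against $-a(1+|\nabla h|^2)G(h)B$. You might state that lemma explicitly rather than folding it into ``routine.''

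For the sign of $\gamma$, your comparison is correct: $\big(1+|\nabla h|^2\big)\gamma=\sqrt{1+|\nabla h|^2}\,\partial_n(\widetilde w-w)\ah$ with $w=|\nabla_{x,y}\phi|^2$ subharmonic and $\widetilde w$ its harmonic lift, and $\widetilde w-w\ge0$ in $\Omega$ with zero boundary data forces $\partial_n(\widetilde w-w)\le0$. One small inaccuracy: $\widetilde w-w$ does not tend to $0$ as $y\to-\infty$; $w\to0$ but $\widetilde w$ tends to the (nonnegative) mean of $B^2+|V|^2$. The minimum principle still applies, since the infimum of the boundary values (including at $-\infty$) is $0$, but the statement as written is wrong. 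Note also that the whole sign argument is exactly the proof of the C\'ordoba--C\'ordoba/C\'ordoba--Mart\'inez pointwise inequality~\e{n1105} specialized to $\Phi(x)=x^2$: applying~\e{n1105} with $\Phi(x)=x^2$ to $f=B$ and to each component $f=V_j$ and summing gives $2BG(h)B+2V\cdot G(h)V\ge G(h)(B^2+|V|^2)$, i.e.\ $\gamma\le0$, in one line. Since the paper already records~\e{n1105}, invoking it directly would be cleaner than re-deriving it.
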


\subsection{Shape derivatives}
Notice that $G(h)\psi$ is linear in $\psi$ but depends nonlinearly in $h$. 
This is one of the main difficulty to study the Hele-Shaw equation. The same problem appears for 
the water-waves problem. One tool to study the dependence in $h$ is 
to consider the shape derivative formula, as given by the following
\begin{proposition}[from Lannes~\cite{LannesJAMS,LannesLivre}]\label{P:shape}
Consider a real number $s$ such that $s>1+n/2$. 
Let $\psi \in H^s(\xT^n)$ and $h\in H^{s} (\xT^n)$. 
Then there is a neighborhood $\mathcal{U}_h\subset H^{s}(\xT^n)$ of $h$ such that
the mapping
$$
h \in \mathcal{U}_h \mapsto G(h)\psi \in H^{s-1}(\xT^n)
$$
is differentiable. Moreover, for all $\zeta\in H^s(\xT^n)$, we have 
\be\label{n:shape}
d G(h)\psi \cdot  \zeta \defn
\lim_{\eps\rightarrow 0} \frac{1}{\eps}\big\{ G(h+\eps \zeta)\psi -G(h)\psi\big\}
= -G(h)(\mathfrak{B}\zeta) -\cnx (\mathfrak{V}\zeta),
\ee
where
$$
\mathfrak{B}=\frac{G(h)\psi+\partialx h\cdot\partialx\psi}{1+\la \partialx h\ra^2},
\quad \mathfrak{V} =\partialx\psi-\mathfrak{B}\partialx h.
$$
\end{proposition}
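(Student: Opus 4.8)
The statement is classical --- it goes back to Lannes --- so the plan is to recall the structure of that argument. It has three ingredients: (a) a change of variables together with the implicit function theorem, which gives both the differentiability of $h\mapsto G(h)\psi$ and a meaning to the shape derivative of the harmonic extension; (b) the identification of that shape derivative as a harmonic extension; (c) a short bookkeeping computation which, combined with Lemma~\ref{L:31}, recognizes the two terms on the right-hand side of~\eqref{n:shape}.

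\textbf{Step 1: flattening and differentiability.} The plan is to fix $h$ with $s>1+n/2$, introduce the diffeomorphism $\Phi_h(x,z)=(x,z+h(x))$ from $\{z<0\}$ onto $\Omega=\{y<h(x)\}$, and pull back~\eqref{defi:phi}: if $\varphi$ is the variational solution, then $u(x,z)\defn\varphi(x,z+h(x))$ solves an elliptic equation $\cnx_{x,z}\big(P(h)\nabla_{x,z}u\big)=0$ in $\{z<0\}$ with $u\arrowvert_{z=0}=\psi$, where $P(h)$ is symmetric positive definite with entries polynomial in $\nabla h$; in particular $h\mapsto P(h)$ is smooth on a neighborhood $\mathcal{U}_h$ of $h$ in $H^s(\xT^n)$, using that $H^{s-1}(\xT^n)$ is an algebra. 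Writing $u=\underline{\psi}+\tilde u$ with $\underline{\psi}$ a fixed bounded extension of $\psi$ (for instance $\underline{\psi}(x,z)=(e^{z\la D_x\ra}\psi)(x)$), the remainder $\tilde u$, which has zero trace, solves a coercive problem; the map $(h,\tilde u)\mapsto \cnx_{x,z}\big(P(h)\nabla_{x,z}(\underline{\psi}+\tilde u)\big)$ is smooth between the relevant energy/Sobolev spaces on the half-cylinder, and its partial differential with respect to $\tilde u$ is an isomorphism by the elliptic estimate, uniformly for $h\in\mathcal{U}_h$. The implicit function theorem then yields that $h\mapsto u$ is smooth; composing with the continuous trace maps that produce $B$, $V$ and $G(h)\psi$ from $u$ at $z=0$ gives the asserted differentiability of $h\mapsto G(h)\psi\in H^{s-1}(\xT^n)$, and shows that the Eulerian shape derivative $\dot\varphi\defn\lim_{\eps\to 0}\eps^{-1}\big(\varphi[h+\eps\zeta]-\varphi[h]\big)$ exists, is harmonic in $\Omega$, and decays as $y\to-\infty$ like $\varphi$.

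\textbf{Step 2: identification of $\dot\varphi$ and the computation.} The next step is to differentiate the boundary condition $\varphi[h](x,h(x))=\psi(x)$ with respect to $h$ in the direction $\zeta$, which gives $\dot\varphi(x,h(x))+(\partial_y\varphi)(x,h(x))\,\zeta(x)=0$, i.e. $\dot\varphi\ah=-\mathfrak{B}\,\zeta$, since $(\partial_y\varphi)\ah=B(h)\psi=\mathfrak{B}$ by the first identity in~\eqref{n897}. As $\dot\varphi$ is harmonic in $\Omega$ with this trace and with the correct decay, it is exactly the harmonic extension of $-\mathfrak{B}\,\zeta$, so that $\big(\partial_y\dot\varphi-\nabla h\cdot\nabla_x\dot\varphi\big)\ah=-G(h)(\mathfrak{B}\,\zeta)$. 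Then I would differentiate the identity $G(h)\psi(x)=\partial_y\varphi(x,h(x))-\nabla h(x)\cdot\nabla_x\varphi(x,h(x))$ with respect to $h$ in the direction $\zeta$, keeping track of the variation of $\varphi$ (which produces the $\dot\varphi$-terms), of the evaluation height $h\mapsto h+\eps\zeta$ (which produces factors $\zeta\,\partial_y(\cdot)$), and of $\nabla h\mapsto\nabla h+\eps\nabla\zeta$; this gives
$$
dG(h)\psi\cdot\zeta=\big(\partial_y\dot\varphi-\nabla h\cdot\nabla_x\dot\varphi\big)\ah+\zeta\big(\partial_y^2\varphi-\nabla h\cdot\partial_y\nabla_x\varphi\big)\ah-\nabla\zeta\cdot(\nabla_x\varphi)\ah.
$$
The first group equals $-G(h)(\mathfrak{B}\,\zeta)$ by the previous computation. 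For the remaining two terms, observe that $\partial_y\varphi$ is harmonic in $\Omega$ with trace $B(h)\psi=\mathfrak{B}$, hence it is the harmonic extension of $\mathfrak{B}$, so that $\big(\partial_y^2\varphi-\nabla h\cdot\partial_y\nabla_x\varphi\big)\ah=G(h)\big(B(h)\psi\big)=-\cnx V(h)\psi=-\cnx\mathfrak{V}$ by the identity~\eqref{n991}, while $(\nabla_x\varphi)\ah=V(h)\psi=\mathfrak{V}$ by the second identity in~\eqref{n897}. Hence the last two terms combine into $-\zeta\,\cnx\mathfrak{V}-\nabla\zeta\cdot\mathfrak{V}=-\cnx(\mathfrak{V}\,\zeta)$, which yields $dG(h)\psi\cdot\zeta=-G(h)(\mathfrak{B}\,\zeta)-\cnx(\mathfrak{V}\,\zeta)$, as claimed.

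\textbf{Main difficulty.} The only genuinely delicate point is Step 1: proving, in Sobolev spaces on the unbounded domain $\{z<0\}$, that the variational solution depends smoothly (and in particular $C^1$) on $h$, with enough control up to the boundary that the trace formulas defining $B$, $V$ and $G(h)\psi$ survive differentiation. This forces one to work in a suitable scale of (exponentially weighted) Sobolev spaces on the half-cylinder and to check the coercivity estimate uniformly for $h$ in a neighborhood of the given profile. Once this is in place, the identification of the shape derivative and the subsequent bookkeeping are purely formal differentiation combined with the algebraic identities of Lemma~\ref{L:31}.
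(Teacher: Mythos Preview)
Your proposal is correct and follows exactly the classical Lannes argument: flatten the domain, apply the implicit function theorem to obtain differentiability of the pulled-back solution, then differentiate the boundary identity and use the algebra of Lemma~\ref{L:31} (in particular~\eqref{n991}) to identify the two terms. The paper itself does not give a proof of this proposition; it simply attributes the result to Lannes~\cite{LannesJAMS,LannesLivre} and notes that for the purposes of the paper only the smooth case is needed, so your sketch is in fact more detailed than the paper's own treatment.
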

This result is proved for smoother function by Lannes in~\cite{LannesJAMS}. 
We refer to his monograph~\cite{LannesLivre} for the proof 
in the general case. However, in this paper, to justify the computations, we only 
need this result for smooth functions.

\section{Elliptic formulation}

In this section we prove Theorem~\ref{proposition:elliptic}. 

Let us recall (see~\S\ref{S:21}) that the operators $B(h)$ and $V(h)$ are 
given by
$$
B(h)\psi=\partial_y \phi\ah,\qquad
V(h)\psi=(\nabla_x \phi)\ah,
$$
where $\phi$ is the harmonic extension of $\psi$ given by \e{defi:phi}. 
Denote by $B(h)^*$ the adjoint for the $L^2(\xT^n)$-scalar product. In light of the identity~\e{n897}, 
one has
\be\label{n998}
B(h)^*\psi=G(h)\left( \frac{\psi}{1+\la \nabla h\ra^2}\right)-\cnx \left(\frac{\psi}{1+\la \nabla h\ra^2}\nabla h\right).
\ee
We are now ready to prove Theorem~\ref{proposition:elliptic} whose statement is recall here.
\begin{theorem}\label{proposition:elliptic-bis}
If $h$ is a smooth solution to $\partial_t h+G(h)h=0$ then 
\be\label{n999}
\Delta_{t,x}h+B(h)^*\big( \la \nabla_{t,x}h\ra^2\big)=0.
\ee
\end{theorem}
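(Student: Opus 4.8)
The strategy is to differentiate the evolution equation $\partial_t h + G(h)h = 0$ once more in time, using the shape derivative formula of Proposition~\ref{P:shape} to handle the dependence of $G(h)$ on $h$, and then to recognize the resulting expression as $-B(h)^*$ applied to $\la \nabla_{t,x}h\ra^2$. Throughout, write $\psi = h$, so $\varphi$ is the harmonic extension of $h$ itself, and $B = B(h)h$, $V = V(h)h$ are exactly the traces in~\e{defi:BVphii}; note that in the shape-derivative formula, $\mathfrak B$ and $\mathfrak V$ evaluated at $\psi = h$ coincide with $B$ and $V$ from~\e{n1201}.

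\textbf{Differentiating in time.} Applying $\partial_t$ to $\partial_t h = -G(h)h$ gives
\be\label{planstep}
\partial_t^2 h = -\big(\partial_t [G(h)]\big)h - G(h)(\partial_t h).
\ee
For the first term on the right, the chain rule and Proposition~\ref{P:shape} with $\zeta = \partial_t h$ give
$$
\big(\partial_t[G(h)]\big)h = dG(h)h\cdot \partial_t h = -G(h)(B\,\partial_t h) - \cnx(V\,\partial_t h),
$$
since $\mathfrak B = B$, $\mathfrak V = V$ when $\psi = h$. For the second term, I would use $\partial_t h = -G(h)h$ together with the identities in Lemma~\ref{L:31}: from~\e{n997}, $G(h)h = B - \nabla h\cdot V$, and from~\e{n991}, $G(h)B = -\cnx V$. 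The point is to re-express $G(h)(\partial_t h)$ in terms of $B$, $V$ and derivatives of $h$ so that, after substituting into~\e{planstep}, everything assembles into $G$ and $\cnx$ applied to quadratic expressions.

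\textbf{Assembling the elliptic identity.} Once~\e{planstep} is rewritten, the goal is to collect the right-hand side into the form $-\Delta_x h - B(h)^*(\la\nabla_{t,x}h\ra^2)$, where $\la \nabla_{t,x}h\ra^2 = (\partial_t h)^2 + |\nabla h|^2$. Here I would use the explicit formula~\e{n998} for the adjoint, namely
$$
B(h)^*f = G(h)\!\left(\frac{f}{1+\la\nabla h\ra^2}\right) - \cnx\!\left(\frac{f}{1+\la\nabla h\ra^2}\nabla h\right),
$$
and the relation $B = \frac{G(h)h + \la\nabla h\ra^2}{1+\la\nabla h\ra^2}$, equivalently $B(1+\la\nabla h\ra^2) = G(h)h + \la\nabla h\ra^2$ and $1 - B = a = \frac{1 - \nabla h\cdot V \cdot(\text{stuff})}{\cdots}$; the divergence structure $\cnx(B\,\partial_t h\,\nabla h)$-type terms should match the $\cnx$ part of $B(h)^*$, and the $G(h)$ terms should match the $G(h)$ part. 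The identity $G(h)(\partial_t h) = \partial_t(G(h)h) - (\partial_t[G(h)])h = \partial_t^2 h + (\text{shape term})$ may also be used to close the computation self-consistently.

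\textbf{Main obstacle.} The delicate point is the bookkeeping in the final assembly: showing that the combination of $G(h)$-terms and $\cnx$-terms produced by the two pieces of~\e{planstep}, after inserting $\partial_t h = -G(h)h = -(B - \nabla h\cdot V)$ and $\Delta_x h = \cnx(\nabla h)$, reorganizes \emph{exactly} into $-\Delta_x h - B(h)^*((\partial_t h)^2 + |\nabla h|^2)$ with the weight $\frac{1}{1+\la\nabla h\ra^2}$ appearing correctly. I expect one needs the algebraic identities $V = (1-B)\nabla h = a\nabla h$ and $1 + \la\nabla h\ra^2 = \frac{1 + G(h)h}{\,?\,}$-type manipulations, plus $G(h)B = -\cnx V$, to make the weighted terms collapse. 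Verifying this is a routine but careful computation; I would organize it by separating the $G(h)(\cdots)$ contributions from the $\cnx(\cdots)$ contributions and matching each against the corresponding half of~\e{n998}.
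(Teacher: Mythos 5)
You have identified the paper's strategy correctly---differentiate $\partial_t h+G(h)h=0$ in time, apply the shape-derivative formula with $\zeta=\partial_t h$, and observe that $\mathfrak B=B$, $\mathfrak V=V$ when $\psi=h$---and your equation~\e{planstep} together with the shape-derivative insertion does reproduce the paper's starting point $\partial_t^2 h=-G(h)\big((1-B)\partial_t h\big)+\cnx\big((\partial_t h)V\big)$. However, the section you label ``Main obstacle'' is not a side issue to be disposed of by routine bookkeeping; it is the entire substance of the proof, and you do not carry it out.

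Concretely, the steps you need and do not supply are: (i) the identity $(1-B)\partial_t h=-B+B^2+|V|^2$, obtained by substituting $\partial_t h=-G(h)h=-(B-V\cdot\nabla h)$ and using $V=(1-B)\nabla h$; (ii) after rewriting $-G(h)B=\cnx V$ via~\e{n991} and splitting $V=\nabla h-B\nabla h$ to extract $\Delta_x h$, the collapse of the two divergence terms
\begin{equation*}
-\cnx(B\nabla h)+\cnx\big((B-V\cdot\nabla h)V\big)=-\cnx\big((B^2+|V|^2)\nabla h\big),
\end{equation*}
which rests on $V-\nabla h=-B\nabla h$ and $(V\cdot\nabla h)V=|V|^2\nabla h$; and (iii) the closing identity
\begin{equation*}
B^2+|V|^2=\frac{(G(h)h)^2+|\nabla h|^2}{1+|\nabla h|^2}=\frac{|\nabla_{t,x}h|^2}{1+|\nabla h|^2},
\end{equation*}
which is what finally produces the weight $1/(1+|\nabla h|^2)$ in exactly the form required to recognize the result as $B(h)^*\big(|\nabla_{t,x}h|^2\big)$ via~\e{n998}. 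Your fragments ``$1-B=a=\frac{1-\nabla h\cdot V\cdot(\text{stuff})}{\cdots}$'' and ``$1+|\nabla h|^2=\frac{1+G(h)h}{?}$'' show that you saw the need for such a collapse but did not resolve it. As written, your proposal is a correct outline that matches the paper's approach, but it is not yet a proof.
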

\begin{proof}
The proof is in two steps. 
We begin by differentiating in time the Hele-Shaw equation
$$
\partial_t h+G(h)h=0. 
$$
It follows from the shape derivative formula~\e{n:shape} that
$$
\partial_t^2 h=-\partial_t G(h)h=-G(h)\big((1-B)\partial_t h\big)+\cnx \big((\partial_{t}h) V\big),
$$
where 
\be\label{n990}
B=B(h)h=\frac{G(h)h+\nabla h\cdot\nabla h}{1+\la \nabla h\ra^2},\qquad 
V=V(h)h=(1-B)\nabla h.
\ee
We next compute $(1-B)\partial_t h$. To do so, we replace $\partial_t h$ by $-G(h)h$ and observe that, 
by definition of the operator $G(h)$,
$$
G(h)h=\big( \partial_y \phi-\nabla h\cdot \nabla \phi\big)\ah=B-V\cdot \nabla h. 
$$
Recalling that $V=(1-B)\nabla h$, it follows that 
\begin{align*}
(1-B)\partial_t h&=-(1-B)G(h)h\\
&=-(1-B)(B-V\cdot\nabla h)\\
&=-B+B^2+V\cdot \big((1-B)\nabla h\big)\\
&=-B+B^2+\la V\ra^2.
\end{align*}
The previous results yield
$$
\partial_t^2 h-G(h)B+G(h)\big(B^2+\la V\ra^2\big)+\cnx \big((B-V\cdot\nabla h)V\big)=0.
$$

We then use the identity $G(h)B=-\cnx V$ (see~\e{n991}) to infer that
$$
\partial_t^2 h+\cnx V +G(h)\big(B^2+\la V\ra^2\big)+\cnx \big((B-V\cdot\nabla h)V \big)=0.
$$
So, replacing $V$ by $\nabla h-B\nabla h$ in $\cnx V$, we have
\be\label{n1000}
\partial_t^2 h+\Delta_x h-\cnx (B\nabla h)
+G(h)\big(B^2+\la V\ra^2\big)+\cnx\big((B-V\cdot\nabla h)V\big)=0.
\ee
To simplify this expression, we begin by observing that
\be\label{n1010}
-\cnx (B\nabla h)+\cnx\big((B-V\cdot\nabla h)V\big)=\cnx\Big(B(V-\nabla h)-(V\cdot\nabla h)V\Big).
\ee
Now
$$
V-\nabla h=-B\nabla h,
$$
so the first term in the right-hand side of~\e{n1010} can be written as
$$
\cnx(B(V-\nabla h))=-\cnx (B^2\nabla h).
$$
Moving to the second term in the right-hand side of \e{n1010}, using again $V=(1-B)\nabla h$, 
we verify that
\begin{align*}
(V\cdot\nabla h)V&=((1-B)\nabla h\cdot \nabla h)(1-B)\nabla h\\
&=(1-B)^2\la \nabla h\ra^2\nabla h\\
&=\la V\ra^2\nabla h.
\end{align*}
Consequently, the identity~\e{n1000} simplifies to
\be\label{n1001}
\Delta_{t,x}h+G(h)(B^2+\la V\ra^2)-\cnx \big((B^2+\la V\ra^2)\nabla h\big)=0.
\ee

The next step consists in expressing $B^2+\la V\ra^2$ in terms of $\nabla_{t,x}h$. 
To do so, using the identities in~\e{n990}, we verify that
\begin{align*}
B^2+\la V\ra^2&=B^2+(1-B)^2\la\nabla h\ra^2\\
&=\left(\frac{G(h)h+\la \nabla h\ra^2}{1+\la \nabla h\ra^2}\right)^2+\left(\frac{1-G(h)h}{1+\la \nabla h\ra^2}\right)^2\la\nabla h\ra^2\\
&=\frac{(G(h)h)^2+\la \nabla h\ra^2}{1+\la \nabla h\ra^2}. 
\end{align*}
Since $\partial_t h=-G(h)h$, we conclude that
\be\label{n1002}
B^2+\la V\ra^2=\frac{(\partial_t h)^2+\la \nabla h\ra^2}{1+\la \nabla h\ra^2}
=\frac{\la \nabla_{t,x}h\ra^2}{1+\la \nabla h\ra^2}. 
\ee
Therefore, the wanted identity~\e{n999} follows from \e{n1001}, \e{n1002} 
and the definition~\e{n998} of~$B(h)^*$.
\end{proof}

\section{Lyapunov functionnals}

In this section, we prove Theorem~\ref{T1}.

\begin{lemma}\label{proposition:Lyapunov1}
Consider a smooth solution $h$ to the Hele-Shaw equation. 
If $\Phi\colon\xR\to\xR$ is a 
$C^2$ convex function, then
\be\label{convexity3}
\fract I_\Phi\le 0 \quad\text{where}\quad I_\Phi(t)=\int_{\xT^n}\Phi(h(t,x))\dx. 
\ee
\end{lemma}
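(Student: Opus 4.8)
The plan is to differentiate $I_\Phi$ in time and to recognize the outcome as a positive Dirichlet-type energy. Since $h$ is smooth we may differentiate under the integral sign, and using the equation in the form $\partial_t h=-G(h)h$ we get
$$
\fract I_\Phi(t)=\int_{\xT^n}\Phi'(h)\,\partial_t h\dx=-\int_{\xT^n}\Phi'(h)\,G(h)h\dx .
$$
So everything reduces to proving the inequality $\int_{\xT^n}\Phi'(h)\,G(h)h\dx\ge 0$.

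The key point is that in the quadratic-form identity~\e{positivityDN} one is free to replace the harmonic extension tested against $G(h)$ by \emph{any} extension of the boundary datum into $\Omega$: only one of the two factors needs to be harmonic. Let $\phi$ be the harmonic extension of $h$, given by~\e{HS4}, and observe that $\Phi'(h)=\bigl(\Phi'(\phi)\bigr)\ah$. Because $\Phi$ is $C^2$ and $\phi$ is bounded (being the harmonic extension of the bounded function $h$), the function $\Phi'(\phi)$ is bounded and $\nabla_{x,y}\bigl(\Phi'(\phi)\bigr)=\Phi''(\phi)\nabla_{x,y}\phi$ lies in $L^2(\Omega)$; hence $\Phi'(\phi)$ is an admissible test function. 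Applying Green's identity to $\phi$ (which is harmonic) and $\Phi'(\phi)$, exactly as in the derivation of~\e{positivityDN}, gives
$$
\int_{\xT^n}\Phi'(h)\,G(h)h\dx=\int_{\partial\Omega}\Phi'(\phi)\,\partial_n\phi\dsigma
=\iint_{\Omega}\nabla_{x,y}\bigl(\Phi'(\phi)\bigr)\cdot\nabla_{x,y}\phi\dydx .
$$

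It then remains to apply the chain rule $\nabla_{x,y}\bigl(\Phi'(\phi)\bigr)=\Phi''(\phi)\nabla_{x,y}\phi$, which turns the last integral into
$$
\int_{\xT^n}\Phi'(h)\,G(h)h\dx=\iint_{\Omega}\Phi''(\phi)\,\bigl|\nabla_{x,y}\phi\bigr|^2\dydx\ge 0,
$$
the sign being forced by the convexity hypothesis $\Phi''\ge 0$. Combined with the first display, this yields $\fract I_\Phi\le 0$, which is~\e{convexity3}. The only genuinely delicate point, as for~\e{positivityDN} itself, is the justification of Green's identity on the unbounded domain $\Omega=\{y<h(x)\}$: one must check that there is no contribution from $y\to-\infty$, which follows from the decay of $\nabla_{x,y}\phi$ built into the variational solution together with the boundedness of $\Phi'(\phi)$. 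For the smooth solutions considered here this is routine, and I expect it to be the only obstacle — the computation is a direct extension of the case $\Phi(h)=h^{2p}$ treated in~\cite{AMS}.
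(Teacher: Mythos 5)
Your proof is correct, and it takes a genuinely different route from the paper. The paper reduces~\e{convexity3} to the pointwise C\'ordoba--C\'ordoba/C\'ordoba--Mart\'{\i}nez type inequality $\Phi'(f)G(h)f\ge G(h)\bigl(\Phi(f)\bigr)$ (inequality~\e{n1105}), whose extension to an arbitrary $C^2$ convex $\Phi$ is a nontrivial result attributed to~\cite{AMS}; once that is granted, integrating over $\xT^n$ and using $\int G(h)\psi\,\dx=0$ finishes the argument. You bypass the pointwise inequality entirely by expressing $\int\Phi'(h)\,G(h)h\,\dx$ as the bulk integral $\iint_\Omega\Phi''(\phi)\,|\nabla_{x,y}\phi|^2\,\dydx$ via Green's identity, exploiting the fact that in the weak form of the Dirichlet-to-Neumann pairing only the factor carrying the normal derivative needs to be harmonic. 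This is more elementary and self-contained: it delivers exactly the integrated inequality needed here, at the cost of not reproducing the stronger pointwise estimate~\e{n1105} that the paper records and reuses later (e.g.\ in the proof of the second lemma). One small technical remark worth making explicit when you justify the integration by parts on the unbounded domain: the harmonic extension $\phi$ tends to the mean $\bar h$ of $h$, not to $0$, as $y\to-\infty$, so the admissible test function in the variational formulation is $\Phi'(\phi)-\Phi'(\bar h)$ rather than $\Phi'(\phi)$ itself; this correction is harmless since $\int G(h)h\,\dx=0$, and it is precisely what ensures there is no contribution at infinity.
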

\begin{proof}We follow the analysis in~\cite{AMS}. 
In~\cite{CC-PNAS-2003,CC-CMP-2004},  C{\'o}rdoba and C{\'o}rdoba proved  
that, for any exponent $\alpha$ in $[0,1]$ and 
any $C^2$ function $f$ decaying sufficiently fast at infinity, 
one has the pointwise inequality
$$
2 f (-\Delta)^\alpha f \ge (-\Delta)^\alpha (f^2).
$$
This inequality has been generalized and applied to many different problems 
(see~\cite{Ju2005maximum,ConstantinIgnatova1,ConstantinIgnatova2,ConstantinTV} 
and the numerous references there in). 
Recently, C\'ordoba and Mart\'\i nez (\cite{CordobaM}) proved that
\be\label{n1105}
\Phi'(f)G(h)f\ge G(h)\big(\Phi(f)\big),
\ee
when $h$ is a $C^2$ function and 
$\Phi(f)=f^{2m}$ for some positive integer $m$. 
In~\cite{AMS}, this result is generalized to the case where $\Phi\colon \xR\to\xR$ 
is an arbitrary $C^2$ convex function and $f,h$ belong 
to some H\"older space $C^{1,\alpha}(\xT^n)$ with $\alpha>0$. 
By using the latter result, we immediately obtain \e{convexity3}. Indeed, by multiplying 
the Hele-Shaw equation by $\Phi'(h)$ and integrating over $\xT^n$, we get that
$$
\fract \int\Phi(h)\dx+\int \Phi'(h)G(h)h\dx=0.
$$
Now, we use the fact that $\int G(h)\psi\dx =0$ for any function $\psi$ to deduce from~\e{n1105} that
\be\label{conv5}
\int \Phi'(h)G(h)h\dx\ge \int G(h)\Phi(h)\dx=0.
\ee
This completes the proof of~\e{convexity1}.
\end{proof}

We now prove the main result. 
\begin{lemma}\label{proposition:Lyapunov2}
Consider a smooth solution $h$ to the Hele-Shaw equation. 
If $\Phi\colon\xR\to\xR$ is a 
$C^3$ convex function whose derivative $\Phi'$ is also convex, then
\be\label{convexity4}
\fractt I_\Phi\ge 0.
\ee
\end{lemma}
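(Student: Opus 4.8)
The plan is to start from the elliptic formulation of Theorem \ref{proposition:elliptic-bis} rather than differentiating $I_\Phi$ twice in time by brute force. Writing $q \defn \la \nabla_{t,x}h\ra^2 \ge 0$, that theorem gives $\Delta_{t,x}h = -B(h)^*q$, and from the explicit form \e{n998} of $B(h)^*$ we read off $\Delta_{t,x}h = -G(h)\big(q/(1+\la\nabla h\ra^2)\big)+\cnx\big((q/(1+\la\nabla h\ra^2))\nabla h\big)$. Now I would compute
$$
\fractt I_\Phi=\fract\!\int \Phi'(h)\,\partial_t h\dx=\int \Phi''(h)(\partial_t h)^2\dx+\int \Phi'(h)\,\partial_t^2 h\dx,
$$
and substitute $\partial_t^2 h=\Delta_{t,x}h-\Delta_x h$. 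The term $-\int\Phi'(h)\Delta_x h\dx=\int\Phi''(h)\la\nabla h\ra^2\dx$ combines with $\int\Phi''(h)(\partial_t h)^2\dx$ to give exactly $\int\Phi''(h)\,q\dx$, which is $\ge 0$ by convexity of $\Phi$. So everything reduces to showing
$$
\int \Phi'(h)\,\big(\Delta_{t,x}h\big)\dx = -\int \Phi'(h)\,B(h)^*q\dx = -\int B(h)\big(\Phi'(h)\big)\,q\dx
$$
has the right sign, i.e.\ that $\int q\,B(h)(\Phi'(h))\dx \le 0$; since $q\ge0$ the cleanest route is a pointwise bound on $B(h)(\Phi'(h))$, or at least a favorable integrated bound.

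The key observation I would try to exploit is a Córdoba–Córdoba / Córdoba–Martínez type inequality adapted to the operator $B(h)$ instead of $G(h)$. Recall from \e{n897} that $B(h)\psi=(G(h)\psi+\nabla h\cdot\nabla\psi)/(1+\la\nabla h\ra^2)$. Applying this to $\psi=\Phi'(h)$, the chain rule gives $\nabla h\cdot\nabla(\Phi'(h))=\Phi''(h)\la\nabla h\ra^2$, so
$$
\big(1+\la\nabla h\ra^2\big)B(h)\big(\Phi'(h)\big)=G(h)\big(\Phi'(h)\big)+\Phi''(h)\la\nabla h\ra^2.
$$
Since $\Phi'$ is convex, I can apply inequality \e{n1105} with $\Phi$ replaced by $\Phi'$ and $\Psi$ a second antiderivative with $\Psi''=\Phi'$: that yields $\Phi''(h)G(h)h\ge G(h)(\Phi'(h))$ pointwise. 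I would also want the companion fact that $G(h)(\Phi'(h))$ is controlled from below so that, after multiplying by $q=(\partial_t h)^2+\la\nabla h\ra^2=(G(h)h)^2+\la\nabla h\ra^2$ and integrating, the $G(h)$-term produces a manifestly non-negative quadratic form (via \e{positivityDN}) while the remaining $\Phi''(h)\la\nabla h\ra^2 q$ term, which is $\ge0$ and has the \emph{wrong} sign after the overall minus, must be absorbed. This sign bookkeeping is the crux: I expect one must regroup $-\int q\,B(h)(\Phi'(h))\dx$ so that the genuinely negative contribution is $-\int\frac{q}{1+\la\nabla h\ra^2}G(h)(\Phi'(h))\dx$ bounded below using convexity of $\Phi'$ and the positivity \e{positivityDN} applied cleverly (perhaps writing $q/(1+\la\nabla h\ra^2)=B^2+\la V\ra^2$ via \e{n1002} and recognizing a perfect-square structure as in the $\Phi(h)=h^2$ case recalled in the Remark after Theorem \ref{T1}).

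The main obstacle I anticipate is precisely closing that sign argument without losing the gain: the naive estimate produces $+\int\Phi''(h)\la\nabla h\ra^2\cdot\frac{q}{1+\la\nabla h\ra^2}\dx$ with the wrong sign relative to the available negative term, so I would need a sharper, \emph{pointwise} version of the Córdoba-type inequality — something like $\Phi''(f)\,G(h)f - G(h)(\Phi'(f))\ge \Phi''(f)\,\rho[f]$ for a nonnegative remainder $\rho[f]$ (the "entropy defect"), and then check that this defect times $q$ dominates the bad term after integration by parts. An alternative, and perhaps safer, plan is to mimic \emph{directly} the computation from \cite{AMS} that gave $\fractt\int h^2\dx=\int a\la\nabla_{t,x}h\ra^2\dx$: differentiate $\fract I_\Phi=-\int\Phi'(h)G(h)h\dx$ once more in time using the shape-derivative formula \e{n:shape}, collect all terms, and hope the convexity of both $\Phi$ and $\Phi'$ is exactly what makes the resulting bilinear expression a sum of a $\int a\,(\cdots)\la\nabla_{t,x}h\ra^2$-type term and a $\int(\text{nonneg})\,G(h)(\text{same})$-type term. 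Either way the heart of the matter is an integrated Córdoba-type inequality for $G(h)$ tested against the non-negative weight $q/(1+\la\nabla h\ra^2)$, and producing the right remainder is where the real work lies.
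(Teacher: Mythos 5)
Your setup is exactly the paper's: write
$\fractt I_\Phi=\int\Phi''(h)\,q\dx+\int\Phi'(h)\,\Delta_{t,x}h\dx$
with $q=\la\nabla_{t,x}h\ra^2$, use Theorem~\ref{proposition:elliptic-bis} to turn the second integral into $-\int B(h)\big(\Phi'(h)\big)\,q\dx$, and invoke the C\'ordoba--Mart\'inez inequality \e{n1105} for the convex function $\Phi'$ to get $G(h)\big(\Phi'(h)\big)\le\Phi''(h)\,G(h)h$. That is all that is needed, and the ``sharper pointwise inequality / entropy defect'' you speculate about is not required.

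The gap is purely in the final bookkeeping: you try to make each of the two terms separately non-negative (you want $\int q\,B(h)(\Phi'(h))\dx\le 0$ on its own), which is indeed too strong, and then you get stuck. The correct close is to keep the two terms together. From \e{n897} and \e{n1105} one has the pointwise bound
\begin{equation*}
B(h)\big(\Phi'(h)\big)=\frac{G(h)\big(\Phi'(h)\big)+\Phi''(h)\la\nabla h\ra^2}{1+\la\nabla h\ra^2}
\le\Phi''(h)\,\frac{G(h)h+\la\nabla h\ra^2}{1+\la\nabla h\ra^2}=\Phi''(h)\,B ,
\end{equation*}
and since $q\ge0$ this yields
\begin{equation*}
\fractt I_\Phi=\int\Phi''(h)\,q\dx-\int B(h)\big(\Phi'(h)\big)\,q\dx
\ge\int\Phi''(h)\,(1-B)\,q\dx=\int a\,\Phi''(h)\,\la\nabla_{t,x}h\ra^2\dx .
\end{equation*}
The ``bad'' contribution $\Phi''(h)\la\nabla h\ra^2\,q/(1+\la\nabla h\ra^2)$ that worried you is absorbed by $\int\Phi''(h)\,q\dx$ because $\la\nabla h\ra^2/(1+\la\nabla h\ra^2)<1$, and what is left over is exactly $\int a\,\Phi''(h)\,q\dx$. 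The missing ingredient is therefore not a new functional inequality but the positivity of the Rayleigh--Taylor coefficient $a=1-B>0$ (Proposition~\ref{Coro:Zaremba-Taylor}), combined with $\Phi''\ge0$. Your detour through \e{positivityDN} and a ``perfect-square structure'' is a red herring here; the paper never needs the quadratic-form positivity of $G(h)$ in this proof. (The paper formulates the same computation slightly differently, multiplying the elliptic equation by $\Phi'(h)$, integrating in $t$ over $[0,T]$ and integrating $\int\Phi'(h)\partial_t^2h$ by parts; this is equivalent to your pointwise-in-time version for smooth solutions.)
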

\begin{proof}
We have seen in the previous proof that
$$
\fract I_\Phi+\int \Phi'(h)G(h)h\dx=0.
$$
So it is sufficient to show that
\be\label{n1106}
\fract \int \Phi'(h)G(h)h\dx\le 0.
\ee
Notice that the latter result is interesting in itself since it asserts 
that
$$
\int \Phi'(h)G(h)h\dx
$$
is a Lyapunov functionnal (this is indeed a coercive quantity, see~\e{conv5}).

To prove~\e{n1106}, we use the elliptic formulation of the Hele-Shaw equation 
given by Theorem~\ref{proposition:elliptic}. Recall that
$$
-\Delta_{t,x}h-B(h)^*\big(\la\nabla_{t,x}h\ra^2\big)=0.
$$
We multiply this equation by $\Phi'(h)$ and integrate first in space. 
This gives that
$$
-\int \Phi'(h)\partial_t^2 h\dx +\int \Phi''(h)\la \nabla_x h\ra^2\dx -\int \big(B(h)\Phi'(h)\big)
\la\nabla_{t,x}h\ra^2\dx=0.
$$
It follows from the identity~\e{n897} for the operator $B(h)$ that
$$
B(h)\Phi'(h)=\frac{G(h)\Phi'(h)+\nabla h\cdot\nabla \Phi'(h)}{1+\la \nabla h\ra^2}.
$$
Since $\Phi'(h)$ is convex, the inequality~\e{n1105} implies that
$$
G(h)\Phi'(h)\le \Phi''(h)G(h)h.
$$
It follows that
$$
B(h)\Phi'(h)\le \Phi''(h)\frac{G(h)h+\la\nabla h\ra^2}{1+\la\nabla h\ra^2}=\Phi''(h)B \quad\text{where }
B=\frac{G(h)h+\la\nabla h\ra^2}{1+\la\nabla h\ra^2}.
$$
Consequently,
\be\label{n1109}
-\int \Phi'(h)\partial_t^2 h\dx +\int \Phi''(h)\la \nabla_x h\ra^2\dx -\int  \Phi''(h)B \la\nabla_{t,x}h\ra^2\dx\le 0.
\ee
Now consider a time $T>0$ and 
integrate by parts in time on $[0,T]$ to obtain
$$
\int_0^T\int \Phi'(h)\partial_t^2 h\dx\dt
=\int \Phi'(h)\partial_t h\dx \bigg\arrowvert_{t=0}^{t=T}-
\int_0^T\int \Phi''(h)(\partial_t h)^2\dx\dt.
$$
By combining this with~\e{n1109}, we find that
$$
-\int \Phi'(h)\partial_t h\dx \bigg\arrowvert_{t=0}^{t=T}
+\int_0^T\int \Phi''(h)(1-B) \la \nabla_{t,x} h\ra^2\dx\dt\le 0.
$$
Remembering that $a=1-B$ and $\partial_t h=-G(h)h$, the preceding inequality implies that
\begin{multline*}
\int \Phi'(h)G(h)h\dx \bigg\arrowvert_{t=T}
+\int_0^T\int \Phi''(h)a \la \nabla_{t,x} h\ra^2\dx\dt\\
\le
\int \Phi'(h)G(h)h\dx \bigg\arrowvert_{t=0}.
\end{multline*}
We now use the fact that the Taylor coefficient $a$ 
is positive 
(see~Proposition~\ref{Coro:Zaremba-Taylor}) 
and the fact that $\Phi$ is convex 
to deduce that $a\Phi''(h)\ge 0$. 
This concludes the proof of~\e{n1106} and hence the proof of the lemma.
\end{proof}

\section{Convexity and entropy}

Here we prove Proposition~\ref{P:entropy} and its corollary. 
Recall the notation
$$
L(h)f=-V\cdot \nabla f-\mez (\cnx V)f
+\sqrt{a}\, G(h)\big(\sqrt{a}f\big).
$$
Recall also that $a(t,x)>0$ for all $t,x$, so that one may consider $\sqrt{a}$ and $\log(a)$.
\begin{proposition}
For any positive constant $m>0$, the function 
$$
u\defn \frac{\log (ma)}{\sqrt{a}},
$$
satisfies
\be\label{n1189}
\partial_t u+L(h)u- \frac{\gamma}{2a}u\ge 0.
\ee
\end{proposition}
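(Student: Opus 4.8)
The plan is to first establish an \emph{exact} identity for $w\defn 1/\sqrt a$ --- namely that $w$ solves the homogeneous version of \e{n1189} --- and then to deduce the stated inequality for $u$ by a chain-rule argument, exploiting the convexity of $\varphi(x)=x\log x$ together with the C\'ordoba--Mart\'\i nez inequality \e{n1105}.

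First I would derive the evolution equation for $a$, and then for $w$. Since $a=1-B$ and $G(h)$ annihilates constants, one has $G(h)B=-G(h)a$; substituting this into \e{defi:gamma} gives $\partial_t a-V\cdot\nabla a+aG(h)a=-\gamma$, while \e{n991} gives $\cnx V=-G(h)B=G(h)a$. Differentiating $w=a^{-1/2}$, inserting the equation for $\partial_t a$ and noting that $-\tfrac12 a^{-3/2}V\cdot\nabla a=V\cdot\nabla w$, one obtains
$$
\partial_t w=V\cdot\nabla w+\frac{1}{2\sqrt a}G(h)a+\frac{\gamma}{2a}w .
$$
On the other hand $\sqrt a\,w=1$, so the nonlocal term in $L(h)w$ collapses: $\sqrt a\,G(h)(\sqrt a\,w)=\sqrt a\,G(h)(1)=0$, whence $L(h)w=-V\cdot\nabla w-\tfrac12(\cnx V)w=-V\cdot\nabla w-\tfrac{1}{2\sqrt a}G(h)a$. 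Adding the two displays yields the key identity $\partial_t w+L(h)w-\tfrac{\gamma}{2a}w=0$.

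Next I would reduce the statement to an inequality for $\varphi(w)$. With $\varphi(x)=x\log x$, which is convex on $(0,\infty)$ and satisfies $w\varphi'(w)-\varphi(w)=w$, the elementary identity $\log(ma)=\log m-2\log w$ gives $u=(\log m)\,w-2\varphi(w)$; by linearity of $L(h)$ it therefore suffices to prove $\partial_t\varphi(w)+L(h)\varphi(w)-\tfrac{\gamma}{2a}\varphi(w)\le 0$. Using $\partial_t\varphi(w)=\varphi'(w)\partial_t w$, the relation $\varphi(w)=w\varphi'(w)-w$, and the identity of the previous step in the form $\partial_t w-\tfrac{\gamma}{2a}w=-L(h)w$, this quantity equals $\big(L(h)\varphi(w)-\varphi'(w)L(h)w\big)+\tfrac{\gamma}{2a}w$, where the last term is $\le 0$ since $\gamma\le 0$ and $a,w>0$. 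It then remains to bound the chain-rule defect $L(h)\varphi(w)-\varphi'(w)L(h)w$: the transport terms cancel; the $\tfrac12(\cnx V)$-terms combine, through $w\varphi'(w)-\varphi(w)=w$ and $\cnx V=G(h)a$, into $\tfrac{1}{2\sqrt a}G(h)a$; and, since $\sqrt a\,\varphi(w)=\log w=-\tfrac12\log a$ while $\sqrt a\,G(h)(\sqrt a\,w)=0$, the nonlocal part gives $-\tfrac{\sqrt a}{2}G(h)(\log a)$. Hence
$$
L(h)\varphi(w)-\varphi'(w)L(h)w=\frac{1}{2\sqrt a}\big(G(h)a-aG(h)(\log a)\big).
$$
Applying \e{n1105} with the convex function $x\mapsto-\log x$ and $f=a$, which is legitimate because $a>0$ by Proposition~\ref{Coro:Zaremba-Taylor}, gives $G(h)(\log a)\ge \tfrac1a G(h)a$, i.e. the defect is $\le 0$; combining the two estimates proves $\partial_t\varphi(w)+L(h)\varphi(w)-\tfrac{\gamma}{2a}\varphi(w)\le 0$, and hence \e{n1189}.

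The main obstacle, to my mind, is spotting the right substitution in the first step: $w=1/\sqrt a$ is precisely the unknown for which $\sqrt a\,w\equiv 1$ forces the nonlocal part of $L(h)w$ to vanish identically, so that $w$ solves the homogeneous equation exactly rather than up to error terms. Granting that, the remaining work is a chain-rule computation whose only delicate point is checking that the two "defects" --- the one arising because $\varphi$ is not homogeneous (so the chain rule fails against the zeroth-order operator $-\gamma/(2a)$), and the one arising from the nonlocal operator $G(h)$ --- both carry the favorable sign; the first is controlled by $\gamma\le 0$, and the second is exactly the content of \e{n1105}.
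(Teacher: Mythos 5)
Your proof is correct and takes a genuinely different route from the paper's, though it rests on the same two basic ingredients: the evolution equation $\partial_t a-V\cdot\nabla a+aG(h)a+\gamma=0$ for the Taylor coefficient, and the convexity inequality \eqref{n1105}. The paper's proof is a more direct computation: it applies $(\partial_t-V\cdot\nabla)$ to $\log(ma)$ and to $1/\sqrt a$ separately, uses \eqref{n1105} with the concave $\log$ on the first to turn the exact identity into an inequality, and then assembles $u$ by the product rule, absorbing the favorable sign of $-\gamma/(a\sqrt a)$ at the very end. Your decomposition $u=(\log m)\,w-2\varphi(w)$ with $w=1/\sqrt a$ and $\varphi(x)=x\log x$ repackages this more structurally. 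The key observation — that $\sqrt a\,w\equiv 1$ forces $\sqrt a\,G(h)(\sqrt a\,w)=0$, so $w$ solves $\partial_t w+L(h)w-\tfrac{\gamma}{2a}w=0$ \emph{exactly} — is not stated in the paper and is a nice clarification of why $1/\sqrt a$ is the natural variable for the operator $L(h)$. From there the two favorable signs are cleanly isolated: the zeroth-order defect $\tfrac{\gamma}{2a}(w\varphi'(w)-\varphi(w))=\tfrac{\gamma}{2a}w\le 0$ from $\gamma\le 0$, and the nonlocal defect $\tfrac{1}{2\sqrt a}\big(G(h)a-aG(h)\log a\big)\le 0$ from \eqref{n1105}. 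What your route buys is conceptual: it exhibits the result as a statement about a convex entropy of a null solution of a well-posed linear equation, which is precisely the point the proposition is meant to illustrate. One small technicality you pass over, and which the paper does address: $-\log$ is not a $C^2$ convex function on all of $\xR$, so to invoke \eqref{n1105} one should replace it (as the paper does) by a globally defined $C^2$ convex function that agrees with $-\log$ on $[c_0/2,+\infty)$, where $c_0=\inf_{[0,T]\times\xT^n}a>0$; your remark ``legitimate because $a>0$'' points in the right direction but should be fleshed out into such an extension argument.
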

\begin{remark}
$i)$ With $m=1$, we have
$$
u=-2v \quad\text{where}\quad v=\frac{1}{\sqrt{a}}\log\left(\frac{1}{\sqrt{a}}\right).
$$
So the preceding proposition implies the result of Proposition~\ref{P:entropy} with
$$
c=-\frac{\gamma}{2a}.
$$
Since $\gamma\le 0$, the latter function is non-negative.

$ii)$ Notice that the right-hand side in~\e{n1189} does not depend on $m$.

$iii)$ We use the parameter $m$ below to control $\inf_{x}a(t,x)$.
\end{remark}
\begin{proof}
Assume that $h$ is a regular solution to the Hele-Shaw equation. As recalled in Proposition~\ref{P:25}, 
the function $B$ satisfies
$$
\partial_t B-V\cdot \nabla B +aG(h)B=\gamma,
$$
where $\gamma\le 0$ is given by
$$
\gamma=\frac{1}{1+|\nabla h|^2}\Big(G(h)\big(B^2+|V|^2\big)-2BG(h)B-2V\cdot G(h)V\Big).
$$
Since $a=1-B$, using the fact that $G(h)1=0$, 
we deduce that
\be\label{n1201b}
\partial_t a-V\cdot \nabla a +aG(h)a+\gamma=0,
\ee
together with
$$
\gamma=\frac{1}{1+|\nabla h|^2}\Big(G(h)\big(a^2+|V|^2\big)-2aG(h)a-2V\cdot G(h)V\Big).
$$

Since $a$ is a positive function, we may 
multiply the equation~\e{n1201b}Ê
by $1/a$, to obtain at once
\be\label{n1203}
(\partial_t -V\cdot \nabla)\log a +G(h)a+\frac{\gamma}{a}=0.
\ee
We now claim that
\be\label{n1205}
G(h)a\le a \, G(h)\log a.
\ee
To do so, we use the fact that $\log$ 
is a concave function and the fact that $a$ 
is bounded from below 
by a positive constant $c_0>0$ on $[0,T]\times \xT^n$. 
This allows us to consider a smooth concave function 
$\theta\colon\xR\to\xR$ which coincides with $\log$ on $[c_0/2,+\infty)$. 
As a result, the inequality~\e{n1105} 
implies that
$$
G(h)\log(a)=G(h)\theta(a)\ge \theta'(a)G(h)a=\frac{1}{a}G(h)a,
$$
which in turn implies \e{n1205}. We next apply \e{n1205} to deduce 
from~\e{n1203} that
$$
(\partial_t -V\cdot \nabla)\log a
+a\, G(h)\log a+\frac{\gamma}{a}\ge 0.
$$
Since $G(h)C$ vanishes for any constant $C$, the preceding inequality 
implies that, for any positive constant $m>0$,
\be\label{n1203b}
(\partial_t -V\cdot \nabla)\log (ma)
+a\, G(h)\log (ma)+\frac{\gamma}{a}\ge 0.
\ee

Now we observe that
\begin{align*}
(\partial_t -V\cdot \nabla)\frac{1}{\sqrt{a}}
&=-\mez \frac{(\partial_t -V\cdot \nabla)a}{a\sqrt{a}}\\
&=\mez\frac{a\, G(h)a+\gamma}{a\sqrt{a}} \qquad (\text{see }\e{n1201})\\
&=\mez \frac{a \cnx V+\gamma}{a\sqrt{a}},
\end{align*}
where we used the identity $G(h)a=-G(h)B=\cnx V$ (see~\e{n991}) in the last line. 
Consequently, 
\begin{align*}
&(\partial_t -V\cdot \nabla)\frac{\log (ma)}{\sqrt{a}}\\
&\qquad\qquad=\frac{1}{\sqrt{a}}(\partial_t -V\cdot \nabla)\log (ma)
+\log (ma)(\partial_t -V\cdot \nabla)\frac{1}{\sqrt{a}}\\
&\qquad\qquad\ge \frac{1}{\sqrt{a}}\Big(-a\, G(h)\log (ma)-\frac{\gamma}{a}\Big)
+\mez \frac{a\, \cnx V+\gamma}{a\sqrt{a}}\log (ma).\\
\end{align*}
Then one easily verifies that 
$u=\log (ma)/\sqrt{a}$ satisfies 
$$
\partial_t u+L(h)u-\mez \frac{\gamma}{a}u\ge -\frac{\gamma}{a\sqrt{a}}.
$$
Since $\gamma\le 0$, this implies the wanted inequality~\e{n1189}.
\end{proof}

We now prove Corollary~\ref{T:Bp} whose 
statement is recalled here.
\begin{corollary}\label{T:Bp2}
Let $n\ge 1$ and consider a regular solution $h$ to 
the Hele-Shaw equation defined on $[0,T]$. 
Then, for all time $t$ in $[0,T]$,
\be\label{n1301}
\inf_{x\in\xT^n}a(t,x)\ge \inf_{x\in \xT^n}a(0,x).
\ee
\end{corollary}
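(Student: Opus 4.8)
The plan is to deduce the corollary from the entropy inequality of Proposition~\ref{P:entropy} (or, more conveniently, from the equivalent inequality~\eqref{n1189}) by a maximum-principle argument, exploiting the freedom in the parameter $m$. The key point is that controlling $\inf_x a(t,x)$ from below is the same as controlling $\sup_x (1/\sqrt{a}(t,x))$ from above, and since $\varphi(s)=s\log s$ is increasing for $s>1/e$, this in turn is governed by $\sup_x u(t,x)$ where $u=\log(ma)/\sqrt{a}$, provided $m$ is chosen large enough that $1/\sqrt{a}$ stays in the increasing regime of $\varphi$ composed with the logarithm.

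First I would fix an initial time and set $\mu\defn \inf_{x}a(0,x)>0$ (positive by Proposition~\ref{Coro:Zaremba-Taylor}). The goal is to show $a(t,x)\ge \mu$ for all $t\in[0,T]$. Suppose not; then there is a first time $t_0$ and a point $x_0$ at which $a(t_0,x_0)=\mu$ and $a(t,x)\ge\mu$ for $t\le t_0$. At $(t_0,x_0)$ the function $u=\log(ma)/\sqrt{a}$ attains a spatial maximum (for suitable $m$, see below), so $\nabla u=0$ and, crucially, $G(h)u\ge 0$ there: indeed $G(h)$ satisfies a maximum principle, so evaluated at a maximum of $u$ one has $G(h)u(t_0,x_0)\ge 0$ — this follows from the Hopf/Zaremba principle, exactly as in the proofs of Propositions~\ref{Coro:Zaremba-Taylor} and~\ref{P:25}. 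Plugging into~\eqref{n1189}, namely $\partial_t u+L(h)u-\tfrac{\gamma}{2a}u\ge 0$ with $L(h)u=-V\cdot\nabla u-\tfrac12(\cnx V)u+\sqrt{a}\,G(h)(\sqrt{a}u)$, and using $\nabla u=0$, $G(h)(\sqrt a u)\ge 0$ at the maximum, together with the sign of $c=-\gamma/(2a)\ge 0$, I would derive $\partial_t u(t_0,x_0)\le$ (terms that are $\le 0$), contradicting the fact that $u$ cannot have increased up to a value exceeding its earlier values. Translating back through the monotone change of variables $a\mapsto 1/\sqrt a\mapsto \varphi(1/\sqrt a)$ gives $a(t_0,x_0)\ge\mu$, the desired contradiction.

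The role of the parameter $m$ is precisely to make the composite map $a\mapsto \log(ma)/\sqrt{a}$ monotone decreasing on the relevant range of $a$: its derivative in $a$ has the sign of $2-\log(ma)$, so choosing $m$ large (e.g. $m\ge e^{2}/\inf_{[0,T]\times\xT^n}a$, which is finite and positive) guarantees $\log(ma)\ge 2$ everywhere, hence $u$ is a strictly decreasing function of $a$ pointwise. Then $a$ attains its spatial infimum exactly where $u$ attains its spatial supremum, and a lower bound propagated for $a$ is equivalent to an upper bound propagated for $u$; moreover the same choice of $m$ is admissible uniformly in $t\in[0,T]$. One should also note, as in Remark~$ii)$ after the proposition, that the inequality~\eqref{n1189} itself is independent of $m$, so no compatibility issue arises; $m$ enters only through the definition of $u$.

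I expect the main obstacle to be the rigorous justification of the pointwise sign $G(h)u(t_0,x_0)\ge 0$ at a spatial maximum, i.e.\ the use of a strong maximum principle for the Dirichlet-to-Neumann operator at an interior extremum, and relatedly the regularity needed to run a classical maximum-principle/first-crossing-time argument. By the smoothing in Theorem~\ref{T:Cauchy} the solution is $C^\infty$ in $(t,x)$ for $t>0$, so on any $[\delta,T]$ the argument above is clean; the behaviour as $\delta\to 0$ then has to be handled by continuity of $t\mapsto \inf_x a(t,x)$ down to $t=0$ (using $h\in C^1([0,T];H^{s-1})$ and the formulas $a=1-B$, $B$ given by~\eqref{n1201}). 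Alternatively, and perhaps more robustly, instead of a first-crossing-time argument one can integrate~\eqref{n1189} in the spirit of a Gronwall/sub-solution estimate against the semigroup generated by $L(h)+c$, using that $L(h)$ is a non-negative operator (see the Remark after Proposition~\ref{P:entropy}) and $c\ge 0$, to conclude directly that $\sup_x u(t,x)\le \sup_x u(0,x)$; this bypasses the pointwise Hopf-type statement at the cost of a slightly more careful energy argument. Either route finishes the proof of~\eqref{n1301}.
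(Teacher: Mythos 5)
Your overall framework (use the entropy inequality with a well-chosen parameter $m$, reduce the lower bound on $a$ to a one-signed bound on $u=\log(ma)/\sqrt{a}$) is the same as the paper's, but the direction of the argument is backwards, and this is not a cosmetic issue.

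The inequality \eqref{n1189} reads $\partial_t u + L(h)u + cu \ge 0$ with $c=-\gamma/(2a)\ge 0$ and $L(h)$ non-negative. This is a \emph{super-solution} inequality: it can only propagate a \emph{lower} bound on $u$ forward in time (if $u(0,\cdot)\ge 0$ then $u(t,\cdot)\ge 0$), not an upper bound on $\sup_x u$. By choosing $m$ large so that $\log(ma)\ge 2$, you make $u$ a strictly \emph{decreasing} function of $a$; then a lower bound on $a$ is equivalent to an \emph{upper} bound on $u$, which is precisely the type of bound \eqref{n1189} cannot give. The pointwise version of the error is visible in your maximum-principle step: at a spatial maximum of $u$, which by your choice of $m$ is a spatial minimum of $a$, the relevant quantity $\sqrt{a}\,u=\log(ma)$ attains a \emph{minimum}, so the Hopf--Zaremba principle gives $G(h)(\sqrt{a}\,u)\le 0$ there, not $\ge 0$. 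Inserting the correct sign into \eqref{n1189} produces a \emph{lower} bound on $\partial_t u$ at that point, which does not obstruct $u$ from increasing and yields no contradiction. Your alternative ``semigroup/Gronwall'' route suffers from the same directional problem: non-negativity of $L(h)$ and $c\ge 0$ give decay of $\int u_-^2$, not of $\sup_x u$.

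The paper resolves this by choosing $m$ the \emph{other} way: $m=1/c_0$ with $c_0=\inf_x a(0,x)$, so that $ma(0,\cdot)\ge 1$ and hence $u(0,\cdot)\ge 0$. No monotonicity of $u$ in $a$ is invoked; what matters is only that $u\ge 0 \iff \log(ma)\ge 0 \iff a\ge 1/m = c_0$. The propagation of $u\ge 0$ is then done by Stampacchia's truncation method: test \eqref{n1189} against $u_-=\min\{u,0\}\le 0$, observe that the transport terms integrate to zero, that $\int (\gamma/a)u_-^2\le 0$, and that the convexity inequality \eqref{n1105} applied with $\Phi(x)=x^2\mathbf{1}_{\xR_-}(x)$ gives $\int \sqrt{a}\,u_- \,G(h)(\sqrt{a}\,u)\,\dx\ge 0$; this yields $\frac{\di}{\dt}\int u_-^2\,\dx\le 0$ and hence $u_-\equiv 0$. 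If you want to repair your write-up, replace ``$m$ large'' by $m=1/\inf_x a(0,x)$, drop the monotonicity discussion entirely, and replace the first-crossing/pointwise Hopf argument by the $L^2$ estimate on $u_-$.
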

\begin{proof} This result can be proved by exploiting only the fact that $\gamma\le 0$. 
Here, we just want to explain how to recover this from the previous proposition. 

Set
$$
c_0=\inf_{x\in \xT^n}a(0,x),\qquad m=\frac{1}{c_0}. 
$$
Then $m a(0,x)\ge 1$ for all $x\in \xT^n$. Set
$$
u=\frac{\log (ma)}{\sqrt{a}},\quad u_-=\min\{u,0\}.
$$ 
We claim that $u_-=0$. 
This will at once imply that $\log(ma)\ge 0$ so $ma(t,x)\ge 1$ for all 
$(t,x)\in [0,T]\times\xT^n$, which in turn implies $a(t,\cdot)\ge 1/m=c_0$, which is the asserted inequality~\e{n1301}. 

To prove that $u_-=0$, we use Stampacchia's method. 
By multiplying the equation~\eqref{n1189} by $u_-\le 0$, one obtains
$$
\mez \fract \int u_-^2\dx +\int u_-L(h)u\dx -\mez \int \frac{\gamma}{a}u_-^2\dx\le 0.
$$
Now using that $\gamma\le 0$ and $a>0$, we have
$$
\int \frac{\gamma}{a}u_-^2\dx\le 0,
$$
so 
\be\label{n1310}
\mez \fract \int u_-^2\dx +\int u_-L(h)u\dx \le 0.
\ee
On the other hand, proceeding as above, 
the convexity inequality~\e{n1105} applied with the function $x\mapsto x^2 \mathbf{1}_{\xR_-}(x)$ implies that
$$
\int u_-L(h)u\dx=\int \sqrt{a}u_-G(h)(\sqrt{a}u)\dx 
\ge \int G(h)\left(\mez a u_-^2\right)\dx= 0.
$$
As result, the preceding inequality~\e{n1310} simplifies to
$$
\mez \fract \int u_-^2\dx\le 0.
$$
Since $u_-(0,\cdot)=0$ at initial time (by construction), we obtain $u_-(t,\cdot)=0$ for all time $t$, 
which terminates the proof.
\end{proof}

%
%

\vspace{1cm}
\noindent\textbf{Thomas Alazard}\\
\noindent Universit\'e Paris-Saclay, ENS Paris-Saclay, CNRS,\\
Centre Borelli UMR9010, avenue des Sciences, 
F-91190 Gif-sur-Yvette\\

\end{document}